\newtheorem{theorem}{Theorem}
\newtheorem{lemma}[theorem]{Lemma}
\newtheorem{corollary}[theorem]{Corollary}
\newtheorem{proposition}[theorem]{Proposition}
\theoremstyle{definition}
\newtheorem{remark}[theorem]{Remark}
\newtheorem{definition}[theorem]{Definition}
\numberwithin{theorem}{section}
\numberwithin{equation}{section}
\newcommand{\Z}{\mathbb{Z}} % integers
\newcommand{\R}{\mathbb{R}} % real numbers
\newcommand{\T}{\mathbb{T}^d} % d-dimensional torus
\newcommand{\intersect}{\cap} % set intersection symbol
\newcommand{\grad}{\nabla} % gradient operator
\newcommand{\abs}[1]{\left| #1 \right|} % absolute value of something
\newcommand{\normX}[1]{{\left\lVert #1 \right\rVert}_X} % norm of something in space X
\newcommand{\normV}[1]{{\left\lVert #1 \right\rVert}_{\text{sup}}} % norm of something in space V
\newcommand{\normC}[1]{{\left\lVert #1 \right\rVert}_{C^0(\T)}} % sup-norm of something on torus
\begin{document}
\title[Nonlinear Fokker-Planck]{Global Well-Posedness of a Nonlinear Fokker-Planck Type Model of Grain Growth}

\author{Batuhan Bayir}
\address[Batuhan Bayir]
{Department of Mathematics,
The University of Utah,
Salt Lake City, UT 84112, USA}
\email{bayir@math.utah.edu}

\author{Yekaterina Epshteyn}
\address[Yekaterina Epshteyn]
{Department of Mathematics,
The University of Utah,
Salt Lake City, UT 84112, USA}
\email{epshteyn@math.utah.edu}

\author{William M Feldman}
\address[William M Feldman]
{Department of Mathematics,
The University of Utah,
Salt Lake City, UT 84112, USA}
\email{feldman@math.utah.edu}

\keywords{polycrystalline materials, microstructures, grain growth modeling, dissipation principle, nonlinear Fokker-Planck equation, global well-posedness}
\subjclass{74A25, %Molecular, statistical, and kinetic theories in solid mechanics
74N15, %Analysis of microstructure in solids
35Q84, % Fokker-Planck equations
35A01  % Existence problems for PDEs: global existence, local existence, non-existence
}

\begin{abstract}
Most technologically useful materials spanning multiple length scales are polycrystalline. Polycrystalline microstructures are composed of a myriad of small crystals or grains with different lattice orientations which are separated by interfaces or grain boundaries. The changes in the grain and grain boundary structure of polycrystals highly influence the material’s properties including, but not limited to, electrical, mechanical, and thermal. Thus, an understanding of how microstructures evolve is essential for the engineering of new materials. In this paper, we consider a recently introduced nonlinear Fokker–Planck-type system and establish a global well-posedness result for it. Such systems under specific energy laws emerge in the modeling of the grain boundary dynamics in polycrystals.
\end{abstract}

\maketitle

\section{Introduction}
%% Introduction
In this paper, we study the global well-posedness of a nonlinear Fokker-Planck type model of grain growth introduced in \cite{epshteyn2023local}. Grain growth is a highly complex multiscale-multiphysics process appearing in materials science which describes the evolution of the microstructure of polycrystalline materials, e.g. \cite{BHATTACHARYA1996529,DK:gbphysrev,BobKohn,DK:DCDSB,MR3729587,epshteyn-timescales,patrick2023relative,paperRickman,LIU2023105329,barmak2024advances,qiu2025grain}. These materials consist of many small monocrystalline grains which are separated by interfaces or grain boundaries as depicted in Figure \ref{coarsening}. 

%%% Figures
\begin{figure}
    \centering
    \scalebox{1}{
    \begin{tikzpicture}
    % Radii of the ellipse
    \def\a{2.5}  % Semi-major axis (horizontal radius of the ellipse)
    \def\b{2}    % Semi-minor axis (vertical radius of the ellipse)
    \definecolor{custom-blue}{HTML}{413c97} % Defining a custom color to match with second figure
                                                                                                  
    % Ellipse at the origin with custom color defined above
    \draw[thick,custom-blue] (0,0) ellipse (\a cm and \b cm);

    % Putting the label a(t) near the origin
    \node[anchor=west,custom-blue] at (-0.95,-0.10) {$\boldsymbol{a}(t)$};
    
    %%% Grain boundaries and lattice orientations
    \node[anchor=west] at (-2.4,-0.1) {$\alpha^{(1)}$};
    \node[anchor=west] at (0.8,1.34) {$\alpha^{(2)}$};
    \node[anchor=west] at (1.35,-0.50) {$\alpha^{(3)}$};
    
    % Draw straight lines passing through the origin and staying within the ellipse
    \foreach \angle in {40, 127, 257} {
        % endpoints of the line in terms of angles
        \pgfmathsetmacro{\x}{\a * cos(\angle)}
        \pgfmathsetmacro{\y}{\b * sin(\angle)}
        % straight line
        \draw[thick,custom-blue] (0,0) -- (\x,\y);
    }

    %%% Lattice misorientations
    \node[anchor=west] at (-1.93,1.1) {$\alpha^{(2)}-\alpha^{(1)}$};
    \node[anchor=west] at (-0.2,0.55) {$\alpha^{(3)}-\alpha^{(2)}$};
    \node[anchor=west] at (-1.36,-1.4) {$\alpha^{(1)}-\alpha^{(3)}$};

    \def\gridsize{0.20}  % Size of the grids
    % Loop through positions and angles for the grids
    \foreach \x/\y/\angle in {0.65/-1/15, 0.3/0.8/35, -1.05/-1/70} {
        \begin{scope}[shift={(\x,\y)}, rotate=\angle]
            % Draw the horizontal internal lines (no top or bottom boundary)
            \foreach \i in {1, 2, 3} {
                \draw[thick] (0, \i*\gridsize) -- (4*\gridsize, \i*\gridsize);
            }
            % Draw the vertical internal lines (no left or right boundary)
            \foreach \i in {1, 2, 3} {
                \draw[thick] (\i*\gridsize, 0) -- (\i*\gridsize, 4*\gridsize);
            }
        \end{scope}
    }

    %%% Dot at the origin with custom color
    \fill[custom-blue] (0,0) circle (3pt);
    \end{tikzpicture}
    }
    % Second figure
    \includegraphics[width=0.395\linewidth]{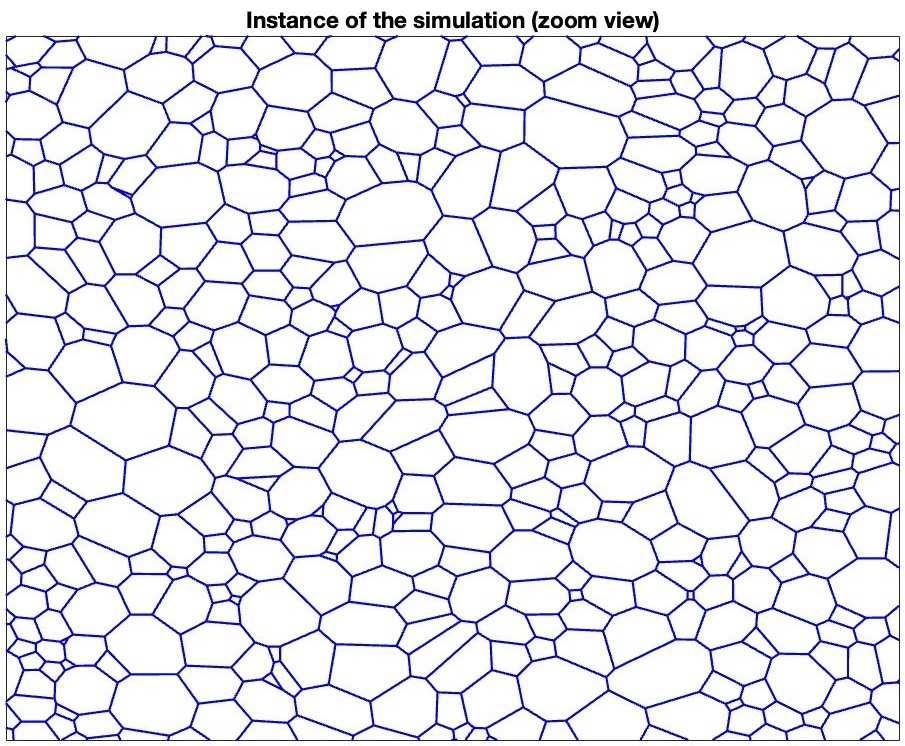}
    \caption{
    Left figure: A schematic plot of three grain boundaries that meet at a triple junction point $\boldsymbol{a}(t)$. For each $i,j \in \{1,2,3\}$, $\alpha^{(j)}=\alpha^{(j)}(t)$ represents the lattice orientation that corresponds to grid lines on the figure and the differences $\alpha^{(i)}-\alpha^{(j)}$ represent the lattice misorientations. Right figure: An instance of the simulation of the two-dimensional grain network that is a collection of grain boundaries and triple junction points \cite{epshteyn2021motion,epshteyn-timescales}. 
    }
    \label{coarsening}
\end{figure}
%%%
The properties of the resulting material depend in a complicated way on many factors, including, but not limited to, initial microstructure configuration and annealing process. By changing the grain statistics of a material, it is possible to control, for instance,  its mechanical, thermal, and electrical properties \cite{kurtz1980microstructure,barmak2024advances}. In particular, smaller grains lead to improvement in material strength and toughness, which are crucial in the ceramics industry \cite{rice1994hardness}. On the other hand, larger grains usually help to reduce electrical resistivity which is useful in the thin film and microchip industries \cite{thompson1994grain,baurer2010linking,barmak2024advances}. So, for manufacturing purposes, it is highly valuable to predict and control polycrystalline microstructure.

For this aim, in \cite{epshteyn2021motion}, the authors introduce a new system for the evolution of the two-dimensional grain boundary network with finite mobility of the triple junctions (triple junction drag) and with dynamic lattice orientations/misorientations. In \cite{epshteyn2021motion}
the grain boundary curvature was assumed to be the fastest time scale and was relaxed first, hence, the grain boundaries became line segments. A relevant recent work \cite{qiu2025grain} provides the experimental evidence justifying such relaxation. The work \cite{doi:10.1137/140999232} is also relevant. Thus, such model \cite{epshteyn2021motion}, which we call the \emph{vertex model}, tracks the evolution of lattice orientations $\alpha^{(j)}(t) \in \R$, $j$ as in the description of Figure \ref{coarsening} and triple junction points $\boldsymbol{a}(t) \in \R^2$. In technical terms this is a system of $O(N)$ ODEs, where $N$ is the number of grains.

%%% Macroscopic PDE model
Since polycrystalline materials in applications consist of $N \gtrsim 10^4$ monocrystalline grains (see, e.g., \cite{patrick2023relative}, but it also depends on the particular setting), this is a coupled system of a very large number of equations.  As is common in such multi-scale physics problems it is desirable to derive and study a macroscopic model which describes the evolution of statistical quantities related to the underlying microscopic system.  For this aim, the authors in \cite{epshteyn2022nonlinear,epshteyn2023local,epshteyn2024longtimeasymptoticbehaviornonlinear} studied the $N \to \infty$ limit of the vertex model resulting in the following free energy with inhomogeneous absolute temperature $D(x)$:\noindent % For some unidentified reason, without \noindent, we have unnecessary blank line before (1.1). This command removes it.
\begin{equation}\label{FE}
    F[f] = \int_{\T} \left( D(x) f(x,t) (\log f(x,t)-1) + \phi(x) f(x,t) \right) dx
\end{equation}
together with the dissipation relation
\begin{equation}\label{dissipation-relation}
    \frac{d}{dt} F[f] = - \int_{\T} \frac{f}{\pi(x,t)} \abs{\grad(D(x)\log f + \phi(x))}^2 dx.
\end{equation}
Here, $\T \coloneqq \R^d / \Z^d$ is $d$-dimensional torus, and we assume that  $d \geq 1$ can be any dimension, $\pi(x,t)>0$ is mobility function, $D(x)>0$ is inhomogeneous but time-independent absolute temperature (it can
be viewed as a function of the fluctuation parameters of the lattice misorientations and of the
position of the triple junctions due to fluctuation-dissipation principle and it also accounts
for some information of the under-resolved mechanisms in the system, for example, such as topological changes during microstructure evolution \cite{Katya-Chun-Mzn3}), and $\phi(x)$ is the energy density of a grain boundary. In relation to the microscopic system described above, the state variable $x$ is a pair $(\boldsymbol{a},\Delta \alpha)$ where $\boldsymbol{a} \in \mathbb{T}^2$ is the triple junction location and $\Delta \alpha \in \mathbb{T}^2$ is the lattice misorientation. Then $f(x,t)$ is the time dependent probability density function on state space of triple junction and lattice misorientation pairs.

By the energetic-variational approach, see e.g., \cite{DK:gbphysrev,giga2017variational, Katya-Chun-Mzn3,epshteyn2023local}, the free energy \eqref{FE} and the dissipation relation \eqref{dissipation-relation} determine the following PDE:
\begin{equation}
    \tag{nFP-D}
    \label{FP-div-form}
    \left\{
    \begin{aligned}
    & \frac{\partial f}{\partial t} = \grad \cdot \left( \frac{f}{\pi(x,t)} \grad \left( D(x) \log f + \phi(x) \right) \right), &\quad &x \in \T,\   t>0, \\
    & f(x,0)=f_0(x), &\quad &x \in \T.
    \end{aligned}   
    \right.
\end{equation}
Due to inhomogeneity of the absolute temperature $D(x)$, the system \eqref{FP-div-form} has a nonlinear term at first order, and is thus only semi-linear. Such inhomogeneity and the resulting non-linearity are very different from the vast existing literature on the Fokker-Planck type models and introduces several challenges for the mathematical analysis of the model, including for the global well-posedness study which we will resolve in this work. In addition to energetic-variational approach, the detailed derivation of this model from a kinematic continuity equations perspective is given in \cite{epshteyn2023local}.

Besides grain growth modeling, Fokker-Planck type partial differential equations also appear in many other areas of science and mathematics, ranging from physics to economics. Some particular examples are: modeling of a gene regulatory network in biology \cite{MR4196904}, cell migration \cite{cell-migration}, plasma physics \cite{plasma-physics}, gas flows \cite{gas-flows}, financial market dynamics \cite{financial-market-dynamics}, computational neuroscience \cite{computational-neuroscience}, galactic nuclei \cite{astrophysics}, probability theory \cite{barbu2024nonlinear}, and optimal transport \cite{jko}.

%%% Contribution
The contribution of this paper is to obtain the existence of the unique classical positive global solution of the nonlinear Fokker-Planck type equation \eqref{FP-div-form}. We achieve this result under some mild assumptions including Hölder regularity of the coefficients of the parabolic operator in \eqref{parabolic-operator} below and strict positivity of the initial data $f_0$ in \eqref{FP}. The detailed description of the assumptions can be found in Section \ref{assumptions} and the main results are in Theorem \ref{local-well-posedness} and Theorem \ref{global-well-posedness}.

%%% Previous studies
In \cite{epshteyn2023local}, the authors obtain the unique classical local solution of \eqref{FP-div-form} under natural (no flux) boundary conditions. They assume $C^{2+\beta}_x$ regularity of the initial data for local existence, which is challenging to control via a priori estimates and to extend to a global solution. In contrast our work only uses uniform upper and lower bounds on the initial data to obtain local existence, and thus can be combined with a maximum principle \cite{epshteyn2022nonlinear,epshteyn2024longtimeasymptoticbehaviornonlinear} to continue the solution.  Also the result in \cite{epshteyn2023local} comes after the application of three different change of variables.
Our analysis, on the other hand, does not rely on any change of variables. 
Therefore, in the current paper,  we construct a direct and much simpler proof/method through the use of classical tools such as Gaussian bounds and Schauder estimates, given in Theorems \ref{gaussiann-bounds} and \ref{schauder-estimates} for the readers' convenience. Such direct method gives better insights in the structure of the model under study.
Moreover, the ideas developed in this work could be extended and applied to more general class of nonlinear Fokker-Planck type systems of interest, beyond the context of grain growth modeling, see Remark \ref{generalization-of-FP}.

In \cite{epshteyn2022nonlinear,epshteyn2024longtimeasymptoticbehaviornonlinear} the authors study long time asymptotic behavior of the solutions of \eqref{FP-div-form}. Call $f^{\textrm{eq}}(x)$ to be the energy minimizing equilibrium state of \eqref{FP-div-form}, which takes the following form: 
\begin{equation}\label{eq-state}
    f^{\mathrm{eq}}(x) \coloneqq \exp \left( -\frac{\phi(x)-C^{\textrm{eq}}}{D(x)} \right)
\end{equation}
where $C^{\textrm{eq}} \in \R$ is a free parameter determined by the total mass. The works \cite{epshteyn2022nonlinear,epshteyn2024longtimeasymptoticbehaviornonlinear} show an a priori estimate bounding the maximum and minimum of solutions of \eqref{FP-div-form} in terms of $D(x)$, $f_0(x)$, and $f^{\textrm{eq}}(x)$, and show convergence to equilibrium as $t \to \infty$. All of these results, however, are conditional on the existence of a global classical solution. Our result complements and completes the picture in \cite{epshteyn2022nonlinear,epshteyn2023local,epshteyn2024longtimeasymptoticbehaviornonlinear} by showing the existence of such solutions. 

\textbf{Summary of the proof.} 
Our analysis below employs tools of linear parabolic PDEs and fixed point theory. We remark that in the existence theory for the semi-linear equations via the fixed point approach, it helps to separate the lower order nonlinearity and treat it as a source term. Hence, from the model \eqref{FP-div-form}, we obtain the following expanded form, which decomposes linear and nonlinear terms:
\begin{equation}
    \tag{nFP}
    \label{FP}
    \left\{
    \begin{aligned}
    & \frac{\partial f}{\partial t} = L_{\textup{FP}}f + \grad \cdot \left( \frac{\grad D (x)}{\pi (x,t)}f \log f \right), &\quad &x \in \T,\   t>0, \\
    & f(x,0)=f_0(x), &\quad &x \in \T.
    \end{aligned}   
    \right.
\end{equation}
Here $L_{\textup{FP}}$ is a divergence form linear operator as defined below:
\begin{equation}\label{parabolic-operator}\tag{LO}
    L_{\textup{FP}}f \coloneqq  \grad \cdot \left( \frac{D(x)}{\pi (x,t)} \grad f \right) + \frac{\grad \phi(x)}{\pi (x,t)}\cdot \grad f + \grad \cdot \left( \frac{\grad \phi (x)}{\pi (x,t)} \right) f.
\end{equation}
%
%Next we follow a typical approach to local existence for semilinear PDE. %%% I disable this, since you put similar kind of sentence to the top and start with the following  
The linear parabolic non-homogeneous equation \eqref{LPNE} contains the nonlinearity of \eqref{FP} as a source term now.
%We treat the nonlinearity of \eqref{FP} as a source term, see \eqref{LPNE}, and apply fixed point mapping techniques. 
After that, in Definition \ref{map-def}, by linearity and Duhamel's principle, we present corresponding map of \eqref{LPNE} in terms of the fundamental solution of $\partial_t-L_{\textup{FP}}$, which is \eqref{map}. Using Gaussian bounds of $\partial_t-L_{\textup{FP}}$ from Corollary \ref{gaussian-integral-estimates}, we prove that the map \eqref{map} is contraction mapping and obtain the unique fixed point solution of \eqref{LPNE} which is also solution of \eqref{FP}. Applying Schauder estimates of Proposition \ref{schauder-estimates}, we upgrade the regularity of the fixed point solution and obtain the unique classical local solution. To extend to a global solution, we repeat our argument on nested time intervals via induction together with an a-priori estimate proved in \cite{epshteyn2022nonlinear,epshteyn2024longtimeasymptoticbehaviornonlinear}, see Proposition \ref{a-priori-estimate-1}.

\textbf{Organization of the paper.} In Section \ref{preliminaries-section}, we state some helpful notations, our assumptions, and some preliminary results from parabolic PDE theory. In Section \ref{local-existence-section}, we establish local existence of solutions. Lastly, in Section \ref{global-existence-section}, we obtain the unique positive classical global solution of \eqref{FP} and complete our discussion.

\section{Preliminaries}
%% Preliminaries
\label{preliminaries-section}
In this section, we list some useful notations, our assumptions, and fundamental results related to parabolic PDEs. 
In the table below, we outline some basic notations
  which will be used throughout the paper.

\subsection{Notations}\label{notation}
\begin{table}[!hbtp]
    \begin{tabular}{@{}ll@{}}
    \toprule
    Function & Description \\ \midrule
    $f(x,t)$ & Unknown probability density function \\
    $f_0(x)$ & Initial condition \\
    $f^\textup{eq}(x)$ & Energy minimizing equilibrium state defined in \eqref{eq-state}\\
    $\pi(x,t)$ & Mobility \\
    $D(x)$ & Inhomogeneous absolute temperature coefficient \\
    $\phi(x)$ & Grain boundary energy density \\
    $V(x,t)$ & Coefficient of the nonlinear part defined by $V(x,t) \coloneqq \frac{\grad D(x)}{\pi(x,t)}$  \\ \bottomrule
    \end{tabular}
    \caption{Summary of the functions that appear in \eqref{FP}. \\ 
    Assumptions on these functions can be found in Section \ref{assumptions}.}
    \label{variables-table}
\end{table}
\begin{enumerate}[label = (N\arabic*)]
    \item \label{notation-constants} Constants $c,C>0$ may change their values from line to line.
    \item \label{notation-abs-norm} We use the symbol $\abs{\cdot}$
      for Euclidean distance on $\R^d$ and the distance on $\T$
      inherited from the Euclidean distance on $\R^d$ via the isometry
      with $\R^d / \Z^d$. Here, $\mathbb{Z}^d$ is
        $d$-dimensional integer lattice and, in this paper, $d$ is any $d \geq 1$.
\end{enumerate}

\subsection{Assumptions}\label{assumptions}
We make the following positivity and regularity assumptions on $L_{\textup{FP}}$ defined in \eqref{parabolic-operator} and $f_0$:
\begin{center}
    \begin{enumerate}[label = (A\arabic*)]
        \item \label{assumption-constants} $\frac{D(x)}{\pi (x,t)} \geq \theta$ for all $(x,t) \in \T \times [0,\infty)$ for some arbitrary $\theta>0$ which is independent from $x$ and $t$. With this assumption, the linear part of \eqref{FP}, $\partial_t-L_{\textup{FP}}$, defines a linear second-order uniformly parabolic differential operator in the sense of Definition \ref{parabolic-operator-def}. Here, $\T$ is $d$-dimensional torus.
        \item \label{regularity-of-coefficients} The coefficients of $L_{\textup{FP}}$ when they are considered in non-divergence form: 
        $\frac{D(x)}{\pi(x,t)}$, $\frac{\grad \phi(x)}{\pi(x,t)}+\grad \left(\frac{D(x)}{\pi(x,t)}\right)$, and $\grad \cdot \left( \frac{\grad \phi (x)}{\pi (x,t)} \right)$ are bounded and belong to $C^{{1+\beta},{\beta/2}}_{x,t}(\T \times [0,\infty))$. In addition, the coefficient of the nonlinear part, $\frac{\grad D (x)}{\pi (x,t)}$ satisfies the same assumptions. Here, $\beta \in (0,1)$ is the Hölder exponent of the coefficients.
        \item  \label{assumption-initial-data} $\Lambda \geq f_0(x) \geq 4\mu$ for some arbitrary but fixed $\Lambda,\mu > 0$ which are independent from $x \in \T$. 
        \item\label{assumption-boundedness} There is $C_D\geq 1$ which is independent from $x \in \T$ such that $D(x) \geq C_D$. There are also $C_{\pi}^{\textup{low}},C_{\pi}^{\textup{up}} >0$ which are independent from $(x,t) \in \T \times [0,\infty)$ such that $C_{\pi}^{\textup{low}} \leq \pi(x,t) \leq C_{\pi}^{\textup{up}}$ \cite{epshteyn2024longtimeasymptoticbehaviornonlinear}.
    \end{enumerate}
\end{center}

\subsection{Parabolic equations}
For ease of reference and for readers' convenience, in this subsection, we review some essential facts from the linear parabolic PDEs. The main source is Friedman's classical book \cite{friedman2008partial}, as well as \cite{ladyzhenskaya,lieberman1996second}. This subsection could be skipped on the first reading and referred back to as needed.

\begin{definition}[Parabolic operators]\label{parabolic-operator-def}
    Consider differential operators in the form:
\begin{equation*}
     L f \coloneqq \sum_{i,j=1}^{d} \frac{\partial}{\partial x_i} \left( a^{ij}(x,t) \frac{\partial f}{\partial x_j} \right) + \sum_{i=1}^{d}b^{i}(x,t) \frac{\partial f}{\partial x_i}+c(x,t)f.
\end{equation*}
We call $\partial_t-L$ to be a second-order uniformly parabolic operator in divergence form if it satisfies the following two properties:
\begin{enumerate}[(P1)]
    \item $a^{ij}(x,t)=a^{ji}(x,t)$ for all $i,j \in \{1,\dots,d\}$ and $(x,t) \in \T \times (0,T]$,
    \item $\sum_{i,j=1}^{d} a^{ij}(x,t) \xi_i \xi_j \geq \lambda \abs{\xi}^2$ holds for all $(x,t) \in \T \times (0,T]$ where $\lambda > 0$ is uniform parabolicity constant which is independent from $(x,t)$.
\end{enumerate}
\end{definition}

Observe that, with Assumption \ref{assumption-constants}, $\partial_t - L_{\textup{FP}}$ defines a second-order uniformly parabolic operator in divergence form where $L_{\textup{FP}}$ specified as in \eqref{parabolic-operator}.

\begin{remark}
    We always assume that the coefficients of $\partial_t-L$ are sufficiently differentiable when we switch to the non-divergence form of the operator by applying product rule. This is convenient for applying the results of Friedman \cite{friedman2008partial}, which are stated for operators in non-divergence form.
\end{remark}

Next we introduce the fundamental solutions of the operator $\partial_t - L$ on $\T \times [0,T]$, which play the role of the heat kernel in this more general context.

\begin{definition}[{Fundamental solution \cite[p.~3]{friedman2008partial}}]\label{fundamental-solution}
    Let $\partial_t - L$ be a second order uniformly parabolic differential operator. We call $K(x,t;y,s)$ to be the fundamental solution of $\partial_t - L$ in $\T \times [0,T]$ if the following two properties hold:
    \begin{enumerate}[(P1)]
        \item $K(x,t;y,s)$ is solution of $\partial_t h = Lh$ as a function of $(x,t) \in \T \times [0,T]$ for every fixed $(y,s)$ with $y \in \T$ and $s<t\leq T$,
        \item for every continuous function $f_0$ on $\T$, $K$ satisfies the limit relation given below
            \begin{equation*}
                \lim_{t \rightarrow s} \int_{\T} K(x,t;y,s) f_0(y) dy = f_0(x).
            \end{equation*}
    \end{enumerate}
\end{definition}

The fundamental solution $K$ is strictly positive function \cite[Thm.~11,~Ch.~2]{friedman2008partial}: 
\begin{equation}\label{positivity-fund-sol}
    K(x,t;y,s)>0 \ \hbox{ for all } \ x,y \in \T, \ \text{and } t>s.
\end{equation}

By using the fundamental solution and Duhamel's principle, it is possible to come up with the following representation formula for the initial value problem associated with $\partial_t-L$.
\begin{theorem}[{Representation formula \cite[Thm.~10,~Ch.~3]{friedman2008partial}}]\label{rep-formula}
    Let $\partial_t-L$ be a second-order uniformly parabolic operator in divergence form such that all of its coefficients in non-divergence form are $\beta$-Hölder continuous. Then, there is a $C^{2+\beta,1+\beta/2}_{x,t}$ classical solution of $\partial_t h = Lh$ with the initial condition $f_0 \in C^0 (\T)$ which
    can be represented by 
    \begin{equation*}
        h(x,t) = \int_{\T} K(x,t;y,0) f_0(y) dy 
    \end{equation*}
    where K is the fundamental solution of $\partial_t - L$ as in Definition \ref{fundamental-solution}. 
\end{theorem}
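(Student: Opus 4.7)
My plan is to follow Levi's classical parametrix method to construct the fundamental solution $K$ explicitly, and then verify that the integral representation yields a classical solution with the claimed regularity.

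The first step is to produce $K$. For each fixed $(y,s)$, I would freeze the non-divergence-form coefficients of $L$ at $(y,s)$, obtaining a constant-coefficient parabolic operator $L_{y,s}$, and take as parametrix $Z(x,t;y,s)$ the Gaussian kernel of $\partial_t - L_{y,s}$ (periodized to $\T$). Writing $L = L_{y,s} + (L - L_{y,s})$, the $\beta$-H\"older continuity assumption \ref{regularity-of-coefficients} guarantees that the defect $(\partial_t - L)Z$ has an integrable singularity as $t \downarrow s$. I would then search for $K$ in the ansatz
\begin{equation*}
K(x,t;y,s) = Z(x,t;y,s) + \int_s^t \!\! \int_{\T} Z(x,t;z,\tau)\,\Phi(z,\tau;y,s)\,dz\,d\tau,
\end{equation*}
which reduces the construction to a Volterra integral equation for $\Phi$. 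Solving it by Neumann iteration produces $K$ together with Gaussian-type upper bounds on $K$, on its spatial derivatives up to order two, and on $\partial_t K$.

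The second step is to show that $h(x,t) \coloneqq \int_{\T} K(x,t;y,0)\, f_0(y)\,dy$ is a classical solution of $\partial_t h = L h$ on $\T \times (0,T]$. The Gaussian bounds on the derivatives of $K$ justify differentiating under the integral sign via dominated convergence, and property (P1) of Definition \ref{fundamental-solution} gives $(\partial_t - L)h \equiv 0$. Property (P2) then yields $h(\cdot,t) \to f_0$ uniformly as $t \downarrow 0$, using only the continuity of $f_0$. To upgrade $h$ to $C^{2+\beta,1+\beta/2}_{x,t}$ on the interior $\T \times (0,T]$, I would combine the H\"older-in-$(x,t)$ continuity of $\partial_x^2 K$ and $\partial_t K$ (obtained by differentiating the Levi series termwise and tracking H\"older moduli carefully) with another application of dominated convergence. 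Since $f_0$ is only continuous, this regularity is interior and does not extend down to $t=0$.

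The main obstacle is establishing the H\"older-in-$(x,t)$ estimates on $\partial_x^2 K$ and $\partial_t K$ with the correct Gaussian decay. Plain Gaussian upper bounds for these derivatives follow fairly directly from the Neumann series, but verifying $C^{\beta,\beta/2}$ continuity of the second derivatives requires careful bookkeeping of the cancellations between successive parametrix iterates. This is precisely where the H\"older regularity of the coefficients is used in full strength, and it is what upgrades the raw continuity of $h$ to the desired Schauder-type regularity asserted in the theorem.
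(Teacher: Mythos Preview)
The paper does not actually prove this theorem; it is quoted as a preliminary result from Friedman \cite[Thm.~10,~Ch.~3]{friedman2008partial} and used as a black box. So there is no ``paper's own proof'' to compare against.

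That said, your sketch is a faithful outline of Levi's parametrix method, which is exactly the construction Friedman carries out in the cited chapters. The ansatz for $K$, the Volterra equation for $\Phi$, the Neumann series, the Gaussian bounds on derivatives, and the passage from bounds on $K$ to regularity of $h$ via differentiation under the integral are all standard ingredients of that argument. Your identification of the main technical obstacle---tracking H\"older moduli of $\partial_x^2 K$ and $\partial_t K$ through the parametrix iteration---is also accurate; this is where Friedman spends most of the effort. One small comment: the periodization to $\T$ is not in Friedman (who works on $\R^d$ or bounded domains), but it is routine once the $\R^d$ kernel is in hand, by summing over the lattice $\Z^d$ and using the Gaussian decay.
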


Moreover, the fundamental solution $K$ of divergence form operator $\partial_t-L$ satisfies the following comparison principle:
\begin{equation}\label{single-int-fund-bound}
    \exp \left( c_{\textup{inf}}(t-s) \right) \leq \int_{\T} {K(x,t;y,s)}dy \leq \exp\left( c_{\textup{sup}}(t-s) \right)
\end{equation}
where $c_{\textup{inf}}$ and $c_{\textup{sup}}$ denotes the infimum and supremum of the coefficient $c(x,t)$ of $\partial_t-L$ on $\T \times (s,\infty)$ respectively and $c(x,t)$ as in  Definition \ref{parabolic-operator-def}. This follows from a standard parabolic comparison principle because the left and right hand side of \eqref{single-int-fund-bound} are, respectively, a subsolution and supersolution of $(\partial_t - L)\phi =0$, the middle term $v(x,t) \coloneqq \int_{\T} {K(x,t;y,s)}dy$ solves $(\partial_t - L)v = 0$, and all three have the same initial data, identically $1$. 
%%% motivation
In particular, if $c(x,t)=0$, then \eqref{single-int-fund-bound} reduces to the maximum principle.

Such fundamental solutions and their derivatives satisfy Gaussian
bounds. In other words,  under suitable regularity assumptions
  on the coefficients of the operator $\partial_t-L$, the
  corresponding fundamental solution $K$ satisfies estimates matching
  those for the heat kernel, see detailed result in Theorem
  \ref{gaussiann-bounds} below. Note, that we will only use the unit
  time bounds, the long time bounds are of a different form, since
  after a unit time, the fundamental solution can be affected by the topology of the torus.
%%% previous unrevised version
%%% Such fundamental solutions and their derivatives satisfy Gaussian bounds similar to the heat kernel, as stated in the following theorem.
\begin{theorem}[{Gaussian bounds on the fundamental solution \cite[Thm.~7,~Ch.~9]{friedman2008partial}, \cite[Thm.~8,~Ch.~9]{friedman2008partial}, 
\cite[p.~255]{friedman2008partial}} (adapted)]\label{gaussiann-bounds}
    Let $K$ be the fundamental solution of the second-order uniformly parabolic operator $\partial_t-L$ in non-divergence form such that all of its coefficients are bounded and $C^{l+\beta,0}_{x,t}$-Hölder regular. 
    Then, for all $x,y \in \T$ and $t-s\leq1$, the following Gaussian bounds hold on $K$ and its derivatives:
    \begin{equation}\label{gaussian-bound-time-space}
        \abs{\partial_t^{a} \grad_y^{b} K(x,t;y,s)} \leq C (t-s)^{-(d+2a+b)/{2}} \exp \left( \frac{-c \abs{x-y}^{2}}{t-s} \right)
    \end{equation}
    and
    \begin{align}
    \begin{split}
        \label{gaussian-gradient-bound-holder-type}
        |\grad_y K&(x,t;y,s) - \grad_y K(x',t;y,s)| \\
        &\leq C \abs{x-x'}^{\beta}(t-s)^{-(d+1+\beta)/2}\left( \exp \left(\frac{-c\abs{x-y}^2}{t-s}\right) + \exp \left(\frac{-c\abs{x'-y}^2}{t-s}\right) \right)
    \end{split}
    \end{align}
    where $\beta \in (0,1)$ and $c,C > 0$ are constants which depend on the dimension, uniform parabolicity constant $\lambda$ of $L$, uniform upper bound of the coefficients of $L$, Hölder regularity of the coefficients of $L$, and order of the derivatives $a,b \in \Z_{\geq 0}$. Note that, $2a+b\leq2+l$. 
\end{theorem}

\begin{remark}
    \label{gaussian-bounds-remark}
    The above result is stated on $\R^d$ in \cite{friedman2008partial}, and it is also mentioned that similar result holds true on bounded domains. Since, the result on $\T$ is not explained in detail in \cite{friedman2008partial}, we briefly comment on how to derive the Gaussian bounds on $\T$ by using the estimates on $\R^d$. Functions on $\T$ are identified with $\Z^d$-periodic functions on $\R^d$ in the standard way. Let $K$ and $K_{\R^d}$ be the fundamental solutions on $\T$ and $\R^d$, respectively. For $x \in \R^d$ and $y \in x+[-1/2,1/2)^d$
    \[K(x,t;y,s) = \sum_{k \in \Z^d} K_{\R^d} (x,t;y+k,s).\]
    Applying the Gaussian bounds of $K_{\R^d}$
    \[|K(x,t;y,s)| \leq C(t-s)^{-d/2}\sum_{k \in \Z^d}\exp \left( \frac{-c \abs{x-(y+k)}^{2}}{t-s} \right). \]
Next, we estimate  the sum above by splitting it into the cases $k=0$,
$0 < |k| \leq2\sqrt{d}$, and $|k| > 2\sqrt{d}$.  Recall that, $y \in
x+[-1/2,1/2)^d$, so $|x-y| \leq \sqrt{d}/2$. If $|k|>0$, then $|x-(y+k)| \geq c|x-y|$ for some dimensional constant $c$. On the other hand, if $|k| >  2\sqrt{d}$ then, 
    \[|x-(y+k)|^2 = |k|^2- 2(x-y) \cdot k+|x-y|^2  \geq  |k|^2 - 2\frac{\sqrt{d}}{2}|k| + |x-y|^2 \geq \frac{1}{2}|k|^2+ |x-y|^2.\]
    Thus, combining these estimates
   \begin{align*}
       |K(x,t;y,s)| &\leq C(t-s)^{-d/2}\exp \left( \frac{-c \abs{x-y}^{2}}{t-s} \right)\left[1+\sum_{|k| > 2 \sqrt{d}}\exp \left( \frac{-c \abs{k}^{2}}{t-s} \right) \right]\\
       &\leq C(t-s)^{-d/2}\exp \left( \frac{-c \abs{x-y}^{2}}{t-s} \right)\left[1+C(t-s)^p\sum_{|k| > 2 \sqrt{d}}|k|^{-2p} \right]\\
       & \leq C(t-s)^{-d/2}\exp \left( \frac{-c \abs{x-y}^{2}}{t-s} \right),
   \end{align*}
    as long as we take $p > d/2$ for summability. We have used the
    elementary inequality $e^{-r} \leq C(p)r^{-p}$ for any $p>0$ and
    all $r\geq 1$ with some $C(p)\geq 1$, as well as the finite time horizon assumption $t-s \leq 1$. Derivative estimates follow in a similar way.
\end{remark}
%%%
In Definition \ref{map-def} of the next section, via Duhamel's principle, we introduce an integral type map \eqref{map} associated with \eqref{FP} which includes the fundamental solution $K$ and its gradient so, it is beneficial to record some quick conclusions of the Gaussian bounds listed in Theorem \ref{gaussiann-bounds}.

%%% Integral bounds on the fundamental solution
\begin{corollary}[Integral bounds on the fundamental solution]\label{gaussian-integral-estimates}
    Let $\partial_t-L$ be a second-order uniformly parabolic operator in non-divergence form which satisfies the assumptions of Theorem \ref{gaussiann-bounds} for $l=1$. Then we have the following integral bounds on the derivatives of the fundamental solution $K$ of $\partial_t-L$:
    \begin{align}
        \label{double-int-fund-grad-bound}
        \int_{t'}^{t} \int_{\T}  \abs{\grad_y K(x,t;y,s)} dyds &\leq C \abs{t-t'}^{1/2}, \\
        \label{triple-int-fund-sol}
        \int_{0}^{t'} \int_{\T} \int_{t'}^{t} \abs{\partial_\tau \grad_y K(x',\tau;y,s)} d\tau dy ds &\leq C \abs{t-t'}^{1/2}, \\
        \intertext{and}
        \label{double-int-fund-space-holder}
        \int_{0}^{t} \int_{\T} \abs{\grad_y K(x,t;y,s)-\grad_y K(x',t;y,s)} dy ds &\leq C t^{(1-\beta)/2} \abs{x-x'}^{\beta},
    \end{align}
    where $\beta \in (0,1)$, $C>0$ is a constant as in Theorem \ref{gaussiann-bounds}, and $t\geq t'$.
\end{corollary}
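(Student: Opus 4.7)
The plan is to insert the pointwise Gaussian estimates of Theorem \ref{gaussiann-bounds} into each integrand, perform the $y$-integration by a standard Gaussian change of variables, and then handle the remaining time integral by elementary calculus. The spatial step uses the elementary fact that, working in a fundamental domain of $\T$ centered at $x$ (where the torus distance agrees with the Euclidean one, per Notation \ref{notation-abs-norm}), a Gaussian integral yields $\int_{\T} r^{-d/2} e^{-c|x-y|^2/r}\,dy \leq C$ for all $r>0$ uniformly in $x$; combined with the trivial estimate $\int_{\T} e^{-c|x-y|^2/r}\,dy \leq |\T|=1$ for large $r$, this handles the $y$-integration in all three estimates.

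For \eqref{double-int-fund-grad-bound}, I would apply \eqref{gaussian-bound-time-space} with $a=0$ and $b=1$ and integrate in $y$ to obtain $\int_{\T}|\grad_y K(x,t;y,s)|\,dy\leq C(t-s)^{-1/2}$; then $\int_{t'}^t(t-s)^{-1/2}\,ds = 2(t-t')^{1/2}$ closes the bound. For \eqref{double-int-fund-space-holder}, I would apply the Hölder-in-$x$ Gaussian bound on $\grad_y K$ from Theorem \ref{gaussiann-bounds}, integrate each of the two Gaussian terms in $y$ to reach $C|x-x'|^\beta(t-s)^{-(1+\beta)/2}$, and conclude via $\int_0^t(t-s)^{-(1+\beta)/2}\,ds = \tfrac{2}{1-\beta}\,t^{(1-\beta)/2}$, which is finite thanks to $\beta<1$.

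The middle estimate \eqref{triple-int-fund-sol} is the only genuinely delicate one, since it involves a higher-order derivative together with two nested time integrations, and is the main obstacle. I would apply \eqref{gaussian-bound-time-space} with $a=b=1$ (permissible since $2a+b=3\leq 2+l=3$) and integrate in $y$ to get $\int_{\T}|\partial_\tau\grad_y K(x',\tau;y,s)|\,dy \leq C(\tau-s)^{-3/2}$. The remaining double time integral is then evaluated explicitly:
\[
\int_0^{t'}\!\!\int_{t'}^{t}(\tau-s)^{-3/2}\,d\tau\,ds \;=\; 2\!\int_0^{t'}\!\bigl[(t'-s)^{-1/2}-(t-s)^{-1/2}\bigr]\,ds \;=\; 4\bigl[(t')^{1/2}+(t-t')^{1/2}-t^{1/2}\bigr],
\]
and the bracket is $\leq (t-t')^{1/2}$ by the subadditivity of the square root, $t^{1/2}\leq(t')^{1/2}+(t-t')^{1/2}$. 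This cancellation of the leading square-root contributions is the only manipulation in the corollary that goes beyond inserting the Gaussian bounds and evaluating routine integrals.
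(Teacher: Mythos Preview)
Your proposal is correct and follows essentially the same approach as the paper: insert the pointwise Gaussian bounds of Theorem~\ref{gaussiann-bounds}, integrate the Gaussian in $y$ over $\T$, and handle the remaining time integrals directly. In particular, for \eqref{triple-int-fund-sol} the paper also reduces to $\int_0^{t'}\!\int_{t'}^{t}(\tau-s)^{-3/2}\,d\tau\,ds$ and finishes via the same square-root cancellation (phrased there as $|a^{1/2}-b^{1/2}|\leq|a-b|^{1/2}$); your subadditivity formulation is equivalent.
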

\begin{proof}
    We only prove \eqref{triple-int-fund-sol}. \eqref{double-int-fund-grad-bound} and \eqref{double-int-fund-space-holder} are shown using similar techniques as \eqref{triple-int-fund-sol}. By using the identification $\T \cong [0,1)^d \subset \R^d$, we can estimate the space integral of Gaussian on $\T$ as
    \begin{align}
        \int_{\T} \exp \left(\frac{-c\abs{x'-y}^2}{\tau-s}\right)dy 
        \leq \int_{\R^d} \exp \left(\frac{-c\abs{x'-y}^2}{\tau-s}\right)dy 
        = C (\tau-s)^{d/2} \label{gaussian-int-on-torus}.
    \end{align}
    
    Now, we start to the estimation with an application of Tonelli's theorem in the left hand side of \eqref{triple-int-fund-sol}.
    \begin{align*}
        \int_{0}^{t'} \int_{\T} \int_{t'}^{t} &\abs{\partial_\tau \grad_y K(x',\tau;y,s)}d\tau dy ds \\ &= \int_{0}^{t'} \int_{t'}^{t} \int_{\T} \abs{\partial_\tau \grad_y K(x',\tau;y,s)} dy d\tau ds \\
        \overset{\eqref{gaussian-bound-time-space}}&{\leq} C \int_{0}^{t'} \int_{t'}^{t} \int_{\T} (\tau-s)^{-(d+3)/2} \exp \left(\frac{-c\abs{x'-y}^2}{\tau-s}\right) dy d\tau ds \\
        \overset{\eqref{gaussian-int-on-torus}}&{\leq} C \abs{\int_{0}^{t'} \int_{t'}^{t} (\tau-s)^{-3/2}  d\tau ds} \\
        &= C \abs{t^{1/2} - t'^{1/2} - (t-t')^{1/2}} \\
        &\leq C \abs{t-t'}^{1/2}
    \end{align*}
    In last step, we have used the elementary inequality $\abs{t^{1/2}-t'^{1/2}}\leq \abs{t-t'}^{1/2}$ which holds for all $t,t' \geq 0$.
\end{proof}
%%%
Before concluding this section, we list two Schauder estimates which will be used to obtain regularity for the fixed point solution of \eqref{FP}, see Corollary \ref{regularity} below. We mention that, although it is accessible by classical techniques, we could not find a classical reference for the first estimate \eqref{schauder-estimate-one}. Instead we are referring to a relatively recent work \cite{dong2015schauder}, which also has results in a much more general setting. In fact, the estimate \eqref{schauder-estimate-one} is quite practical since it allows us to obtain the first order spatial differentiability of the solution $u$ of
$\partial_t u = Lu + \grad \cdot A$ whenever $A$ is continuous. More precisely, we have the following result.

\begin{proposition}[\label{schauder-estimates}{Schauder estimates \cite[Prop.~4.1]{dong2015schauder}}]
    Let $\partial_t-L$ be a second-order uniformly parabolic operator in divergence form such that all of its coefficients are bounded and $\beta$-Hölder continuous. Also, let $Q_r$ denotes the parabolic cylinder of radius $r$ and $A(x,t) \in C_{x,t}^{\beta,0}(Q_1)$.
    If  $u \in C^{2+\beta,1+\beta/2}_{x,t}(Q_2)$ solves the PDE
    \begin{align*}
        \frac{\partial u}{\partial t} = Lu+\grad \cdot A(x,t) 
    \end{align*}
    then
    \begin{equation} 
        \label{schauder-estimate-one}
        \|u\|_{C^{1+\beta,(1+\beta)/2}_{x,t}(Q_1)} \leq C(\|u\|_{L^\infty(Q_2)} + [A]_{C^\beta_x(Q_2)}).
    \end{equation}
    Next, assume further that the coefficients of $\partial_t-L$ in non-divergence form are bounded and $\beta$-Hölder continuous. Let $B(x,t) \in C_{x,t}^{\beta,\beta/2}(Q_1)$, if $v \in C^{2+\beta,1+\beta/2}_{x,t}(Q_2)$ solves the PDE
    \begin{equation*}
        \frac{\partial v}{\partial t} = Lv+B(x,t)
    \end{equation*}
    then
    \begin{align}
        \label{schauder-estimate-two}
        \|v\|_{C^{2+\beta,1+\beta/2}_{x,t}(Q_1)} \leq C(\|v\|_{L^\infty(Q_2)} + \|B\|_{C^{\beta,\beta/2}_{x,t}(Q_2)}).
    \end{align}
    The constant $C$ depends only on the dimension, uniform parabolicity constant $\lambda$ of $L$, uniform upper bound and Hölder norms of the coefficients of $L$.
\end{proposition}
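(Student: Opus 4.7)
The plan is to establish both estimates via the classical freezing-of-coefficients strategy combined with the Campanato characterization of parabolic Hölder spaces; this is the approach pursued in \cite{dong2015schauder}. By a standard partition-of-unity / covering argument it suffices to prove the estimates centered at an arbitrary interior base point, which after translation we may take to be the origin. At that base point we compare $u$ (respectively $v$) with the solution $w$ of the constant-coefficient reference problem obtained by freezing the leading coefficient matrix $(a^{ij}(x,t))$ at $(a^{ij}(0,0))$ and discarding the lower-order terms.

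For the reference problem, explicit heat-kernel estimates, with the frozen symmetric matrix playing the role of the diffusion matrix, give sharp pointwise control of $w$ and all of its derivatives on any parabolic sub-cylinder $Q_r$. The difference $h \coloneqq u - w$ then solves a parabolic equation driven by two source terms: the original source ($\grad \cdot A$ in the first estimate, or $B$ in the second) restricted to $Q_r$, together with the coefficient perturbation $\sum_{i,j}\bigl(a^{ij}(x,t)-a^{ij}(0,0)\bigr)\partial_{ij} u$ and analogous lower-order contributions, whose size is controlled by $r^\beta$ times the relevant Hölder norm of $u$. Representing $h$ through Duhamel's formula against the variable-coefficient fundamental solution $K$ of Theorem \ref{rep-formula}, and using the Gaussian bounds of Theorem \ref{gaussiann-bounds} together with their integrated versions in Corollary \ref{gaussian-integral-estimates}, one obtains decay of the mean oscillation of $\grad u$ on $Q_r$ at rate $r^{1+\beta}$ for the first estimate, and decay of the mean oscillation of $D^2 v$ on $Q_r$ at rate $r^\beta$ for the second.

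By the Campanato characterization of parabolic Hölder spaces, these mean-oscillation decays translate directly into the claimed spatial Hölder norms of $\grad u$ and $D^2 v$. The temporal Hölder norms, with exponents $(1+\beta)/2$ and $1+\beta/2$ respectively, are then read off from the PDEs themselves: once the spatial derivatives are controlled in Hölder norm, $\partial_t u$ and $\partial_t v$ are pinned down by the equation, and the natural parabolic scaling $|t-t'|^{1/2} \sim |x-x'|$ converts spatial decay into temporal decay of the appropriate exponent. The perturbative error terms, which still contain the very norms one is trying to bound, are absorbed into the left-hand side by choosing $r$ sufficiently small and running a standard iteration, leaving only $\|u\|_{L^\infty}+[A]_{C^\beta_x}$ (respectively $\|v\|_{L^\infty} + \|B\|_{C^{\beta,\beta/2}}$) on the right.

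The main technical obstacle is the asymmetric regularity hypothesis in the first estimate: $A$ is assumed only Hölder in space and merely measurable in time, yet the conclusion demands Hölder continuity of $\grad u$ in time with exponent $(1+\beta)/2$. This forced time regularity cannot be extracted by naive estimates; the key ingredient is the sharp integrated time-gradient bound \eqref{triple-int-fund-sol} on $\partial_\tau \grad_y K$, which yields exactly the correct $|t-t'|^{1/2}$ decay. Once this delicate temporal estimate is in place, the remainder follows the standard Campanato/perturbation template, and the Hölder regularity of the coefficients of $\partial_t - L$ ensures convergence of the iteration with final constant depending only on the parameters listed in the statement.
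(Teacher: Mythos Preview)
The paper does not prove this proposition at all: it is quoted verbatim from \cite[Prop.~4.1]{dong2015schauder} as a black-box tool, invoked later in Corollary~\ref{regularity}. So there is no ``paper proof'' to compare against.

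Your sketch is a fair outline of the classical freezing-of-coefficients / Campanato route, which is indeed the strategy of the cited reference. Two technical points would need to be fixed if you actually carried it out. First, in the divergence-form estimate \eqref{schauder-estimate-one} the leading coefficients $a^{ij}$ are assumed only $C^\beta$, not $C^1$, so the perturbation term cannot be written as $\sum_{i,j}(a^{ij}(x,t)-a^{ij}(0,0))\partial_{ij}u$; the comparison with the frozen problem has to remain in divergence form, and one controls the oscillation of $\nabla u$ rather than $D^2 u$. Second, the bound \eqref{triple-int-fund-sol} gives decay $|t-t'|^{1/2}$, not $|t-t'|^{(1+\beta)/2}$, so it cannot by itself furnish the claimed temporal H\"older exponent of $u$; the extra $\beta/2$ has to come from the Campanato iteration (the $r^\beta$ smallness of the frozen-coefficient error feeding back through scales), not from a single application of the kernel bound. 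Finally, in the freezing argument one normally represents the difference against the \emph{constant-coefficient} heat kernel with matrix $a^{ij}(0,0)$, not the variable-coefficient $K$ of Theorem~\ref{rep-formula}; using the latter is circular since its Gaussian bounds already presuppose the kind of regularity theory you are trying to establish.
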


\section{Proof of Local Existence}
%% Local Existence Proof
\label{local-existence-section}
In this section, we will establish the local existence of solutions of
\eqref{FP}. As presented in detail below, to prove the local existence, we intend to utilize certain tools from the linear parabolic PDEs and fixed point theory. 
With this aim in mind, we handle the logarithmic nonlinearity of
\eqref{FP} as a source term and derive a Duhamel type fixed point
formulation of the nonlinear PDE. After that, we study well-definiteness, contraction, and regularity properties of the fixed point mapping, defined below in \eqref{map}, to obtain the desired local existence result. 

In order to deal with the nonlinearity of \eqref{FP}, we introduce the following linear parabolic non-homogeneous equation which contains the nonlinearity of \eqref{FP} as a source term. The PDE is as follows:
\begin{equation}
    \tag{LPNE}
    \label{LPNE}
    \left\{
    \begin{aligned}
    & \frac{\partial u}{\partial t} = L_{\textup{FP}}u + \grad \cdot \left( \frac{\grad D (x)}{\pi (x,t)}f \log f \right), &\quad &x \in \T,\   t>0, \\
    & u(x,0)=f_0(x), &\quad &x \in \T,
    \end{aligned}   
    \right.
\end{equation}
where $L_{\textup{FP}}$ as defined in \eqref{parabolic-operator}. 
%At this point, $f=f(x,t) \in X$ is an arbitrary function such that $f(x,0)=f_0(x)$, where $X$ is defined below. %%% this is unnecessary, removed.
%
%%%
As we pointed out, \eqref{LPNE} is linear in the variable $u$, hence, by Duhamel's principle its solution can be written as:
\begin{equation}
    \tag{DP}
    \begin{aligned}
    \label{map-before-IBP}
    u = \int_{\T} &K(x,t;y,0)f_0(y)dy \\ &+\int_{0}^{t} \int_{\T} K(x,t;y,s) \grad_y \cdot \left( \frac{\grad D(y)}{\pi (y,s)}f(y,s) \log f(y,s)\right) dy ds,
    \end{aligned}
\end{equation}
where $K$ is the fundamental solution of the second-order uniformly
parabolic operator $\partial_t-L_{\textup{FP}}$. Here, $L_{\textup{FP}}$ as in \eqref{parabolic-operator} and its coefficients satisfy Assumptions \ref{assumption-constants} and \ref{regularity-of-coefficients}. 
The integral formulation of \eqref{LPNE} via \eqref{map-before-IBP} suggests a natural fixed point functional to prove existence via the Banach fixed point theorem.

Next we lay out the precise functional space setting where we will be able to apply the Banach fixed point theorem. In this regard, consider the Banach space
$X \coloneqq C^0(\T \times [0,T])$ which is equipped with the standard supremum norm

\begin{equation*}
    \normX{f} \coloneqq \sup_{(x,t)\in \T \times [0,T]} \abs{f(x,t)}
\end{equation*}
%for any $f \in X$. %%% this is unnecessary
and $T>0$ specified in \eqref{time-bound} later on.
For our construction, besides the closed ball property, we additionally employ the positivity condition $f \geq \mu > 0$ and introduce the following norm-closed subset of $X$
\begin{equation}
    \label{set-Y}
    Y \coloneqq \{f \in X \ | \ f \geq \mu, \ \normX{f} \leq R \},
\end{equation}   
where  $\mu>0$ is the parameter from Assumption \ref{assumption-initial-data}, and $R \coloneqq 1 + \mu +  2\normC{f_0}$. 

First, we note that, the uniform upper and lower bounds---$R$ and $\mu$---in this functional setting provide uniform control on the regularity of the logarithmic nonlinearity, see the norm estimates \eqref{norm-estimate-one} and \eqref{norm-estimate-two} derived below.
%First, this positivity condition helps to control the logarithmic nonlinearity that appear in our problem. In more general terms, we have a uniform control on the logarithm function on $Y$, see the norm estimates \eqref{norm-estimate-one} and \eqref{norm-estimate-two} derived below.
%
%Especially, by applying integration by parts to the second integral of \eqref{map-before-IBP}, in Definition \ref{map-def}, we introduce the map \eqref{map} associated with \eqref{LPNE}. Consequently, the norm estimates \eqref{norm-estimate-one} and \eqref{norm-estimate-two} will apply to the map \eqref{map} and, together with Gaussian bounds in Theorem \ref{gaussiann-bounds}, provide well-definiteness and contraction properties.
%%% I think the second motivation is in a good shape.
Second, this positivity condition will also directly lead to existence of a positive solution for a given positive initial data (see Assumption \ref{assumption-initial-data}) which is physically relevant for our model.

Now, motivated by the formula \eqref{map-before-IBP}, via integration by parts, we introduce the following fixed point functional.

\begin{definition}[Map]\label{map-def}
    We define the map $\Psi: Y \rightarrow Y$, $f \mapsto u$ by the formula given below
    \begin{gather}
        \begin{aligned}\label{map}
        u = \Psi f (x,t) &= \int_{\T} K(x,t;y,0)f_0(y)dy \\ &\qquad-\int_{0}^{t} \int_{\T} \grad_y K(x,t;y,s) \cdot V(y,s) f(y,s) \log f(y,s) dy ds \\
        &\eqqcolon \Psi_{f_0,\textnormal{linear}}  (x,t) + \Psi_{f,\textnormal{nonlinear}} (x,t)
        \end{aligned}
        \tag{M}
    \end{gather}
    where $V(y,s) \coloneqq \frac{\grad D(y)}{\pi (y,s)}$.
\end{definition}

\begin{remark}
The integration by parts does not produce any boundary term since $\partial \T = \emptyset$. 
We use the subscripts ``linear" and ``nonlinear" since $\Psi_{f_0,\textnormal{linear}}$ solves the linear part of \eqref{FP} with the initial data $f_0$ and $\Psi_{f,\textnormal{nonlinear}}$ is associated with the nonlinear part of \eqref{FP}.
In the above form, it is easy to make use of Corollary \ref{gaussian-integral-estimates} along with the norm estimates \eqref{norm-estimate-one} and \eqref{norm-estimate-two} given below.
\end{remark}

%%%
Our nonlinear Fokker-Planck type equation \eqref{FP} contains logarithmic nonlinearity so, we should be able to control this term.
To this end, for any $f$ and $g$ in $Y \subset X$, we have the following norm estimates:
    \begin{align}
        \normX{\log f - \log g} &\leq \frac{1}{\mu} \normX{f-g} \label{norm-estimate-one} \\
        \intertext{and}
        \normX{\log f} &\leq \frac{R}{\mu} + \abs{\log \mu} + 1. \label{norm-estimate-two}
    \end{align}
    For the derivation, since both $f$ and $g$ are in $Y$, we have $f,g \geq \mu$ and by using $\frac{1}{\mu}$-Lipschitz continuity of the logarithm function on $[\mu,\infty)$ we have
    \begin{equation*}
        \abs{\log f - \log g} \leq \frac{1}{\mu}\abs{f-g}.
    \end{equation*}
    After that, by taking the supremum of both sides on $\T \times [0,T]$ we obtain \eqref{norm-estimate-one}. 
    For \eqref{norm-estimate-two}, by using the triangle inequality and Lipschitz continuity again, we can estimate
    \begin{equation*}
        \begin{split}
            \abs{\log f} &\leq \abs{\log f - \log \mu} + \abs{\log \mu} \\
                         &\leq \frac{1}{\mu} \abs{f-\mu} + \abs{\log \mu} \\
                         &\leq \frac{1}{\mu}\abs{f} + \abs{\log \mu} + 1.
        \end{split}
    \end{equation*}
    Then, similarly, by taking supremum of both sides on $\T \times [0,T]$ in the last inequality and using the fact that $\normX{f} \leq R$ gives \eqref{norm-estimate-two}. 
%%%

%%% Well-defiteness

First, recall $R = 1 + \mu + 2\normC{f_0}$, $\mu>0$ is arbitrary but fixed real number as in  Assumption \ref{assumption-initial-data}, $C>0$ is a constant depending only on the constants in Theorem \ref{gaussiann-bounds} which will be fixed during the proof, and $V=\frac{\grad D}{\pi}$ as in Definition \ref{map-def}. Also, $W_{\textup{inf}}$ and $W_{\textup{sup}}$ denotes the infimum and supremum of the coefficient $W(x,t) \coloneqq \grad \cdot \left( \frac{\grad \phi (x)}{\pi (x,t)} \right)$ of the zeroth order term in $\partial_t-L_{\textup{FP}}$. Then we choose the time bound given below:

\begin{equation}\label{time-bound}
    T^{1/2} \leq \min \left\{ \frac{\min \{ \mu,1 \} }{ 2 \left( CR \left( \frac{2R}{\mu} + \abs{\log \mu} + 1 \right) \normV{V} +1 \right)}, \left( \frac{\log 2}{\abs{W_{\textup{inf}}}+\abs{W_{\textup{sup}}}+1} \right)^{1/2} \right\}.
\end{equation}

With these preparations, in the next lemma, we show that the map \eqref{map} is well-defined, i.e. maps $Y$ to itself. Hereafter, in proofs, we adapt the notational conventions from Section \ref{notation} for brevity.

%%%
\begin{lemma}[Well-definiteness of the map]\label{well-definedness}
    The map $\eqref{map}$ is well-defined i.e. maps $Y$ to itself where $Y$ as defined in \eqref{set-Y}.
\end{lemma}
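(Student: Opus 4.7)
The plan is to verify the three defining properties of membership in $Y$ for $u \coloneqq \Psi f$: continuity on $\T \times [0,T]$, the pointwise lower bound $u \geq \mu$, and the sup-norm upper bound $\normX{u} \leq R$. I would treat the linear piece $\Psi_{f_0,\textnormal{linear}}$ and the nonlinear piece $\Psi_{f,\textnormal{nonlinear}}$ separately: the linear piece carries the bulk of the size of $u$ and delivers the lower bound via comparison with $4\mu$, while the nonlinear piece is a controllable perturbation of size at most $\min\{\mu,1\}/2$ thanks to the choice of $T$ in \eqref{time-bound}.

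First I would handle continuity. $\Psi_{f_0,\textnormal{linear}}$ is a classical solution of the linear initial value problem by Theorem \ref{rep-formula}, hence continuous. For $\Psi_{f,\textnormal{nonlinear}}$, I would split the difference $\Psi_{f,\textnormal{nonlinear}}(x,t) - \Psi_{f,\textnormal{nonlinear}}(x',t')$ into time and space increments and invoke Corollary \ref{gaussian-integral-estimates}: the time increment is controlled by combining \eqref{triple-int-fund-sol} with the tail piece estimated by \eqref{double-int-fund-grad-bound}, yielding an $O(|t-t'|^{1/2})$ modulus, and the space increment by \eqref{double-int-fund-space-holder}, yielding an $O(|x-x'|^{\beta})$ modulus. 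In all of these estimates, the integrand $V(y,s)f(y,s)\log f(y,s)$ is uniformly bounded by $\normV{V}\cdot R \cdot \bigl(R/\mu + |\log \mu|+1\bigr)$ using Assumption \ref{regularity-of-coefficients} and the norm estimate \eqref{norm-estimate-two}.

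For the lower bound, the subsolution side of \eqref{single-int-fund-bound} together with $f_0 \geq 4\mu$ gives $\Psi_{f_0,\textnormal{linear}}(x,t) \geq 4\mu \exp(W_{\textup{inf}} t) \geq 4\mu \exp(-(|W_{\textup{inf}}|+|W_{\textup{sup}}|+1)T) \geq 2\mu$, where the last step uses the second clause in \eqref{time-bound}. For the nonlinear piece, \eqref{double-int-fund-grad-bound} and \eqref{norm-estimate-two} yield
\begin{equation*}
    |\Psi_{f,\textnormal{nonlinear}}(x,t)| \leq C R \bigl(\tfrac{R}{\mu}+|\log \mu|+1\bigr)\normV{V}\,T^{1/2} \leq \tfrac{\min\{\mu,1\}}{2} \leq \tfrac{\mu}{2}
\end{equation*}
by the first clause in \eqref{time-bound}. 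Adding gives $u \geq 3\mu/2 \geq \mu$, and the same two estimates applied on the supersolution side of \eqref{single-int-fund-bound} give $\Psi_{f_0,\textnormal{linear}} \leq \normC{f_0}\exp(W_{\textup{sup}}T) \leq 2\normC{f_0}$ and $|\Psi_{f,\textnormal{nonlinear}}| \leq 1$, so $\normX{u} \leq 2\normC{f_0}+1 \leq R$. The only real obstacle is bookkeeping: the constant $C$ in \eqref{time-bound} must be chosen once and for all as the maximum of the constants supplied by Corollary \ref{gaussian-integral-estimates}, so that the two clauses in \eqref{time-bound} are exactly the conditions ensuring, respectively, the $\geq 2\mu$ bound on the linear part and the $\leq \min\{\mu,1\}/2$ bound on the nonlinear perturbation.
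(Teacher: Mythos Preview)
Your proposal is correct and follows essentially the same approach as the paper: decompose $\Psi f$ into linear and nonlinear parts, establish continuity of the nonlinear part via the three-term splitting controlled by \eqref{double-int-fund-grad-bound}, \eqref{triple-int-fund-sol}, and \eqref{double-int-fund-space-holder}, and then obtain the upper and lower bounds by pairing the comparison estimate \eqref{single-int-fund-bound} on the linear part with the $O(T^{1/2})$ bound on the nonlinear perturbation, using the two clauses of \eqref{time-bound} exactly as you indicate. The only differences are cosmetic (the paper verifies the upper bound before the lower bound and arrives at $\Psi f \geq 2\mu - \min\{\mu,1\} \geq \mu$ rather than your slightly sharper $3\mu/2$), and your remark about fixing $C$ once as the maximum of the Corollary~\ref{gaussian-integral-estimates} constants is precisely the convention the paper adopts implicitly.
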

\begin{proof}
    First, we should prove that, for any $f \in Y$, $(\Psi f)(x,t)$ defines a continuous function of $(x,t)$. From the definition of map \eqref{map}, we have the decomposition
    \begin{equation*}
        (\Psi f)(x,t) = (\Psi_{f_0,\textnormal{linear}})(x,t) + (\Psi_{f_,\textnormal{nonlinear}})(x,t)
    \end{equation*}
    so, it is sufficient to prove the continuity of the each part separately. The linear part, $(\Psi_{f_0,\textnormal{linear}})(x,t)$, is continuous function of $(x,t)$ according to Theorem \ref{rep-formula}. 

    For the nonlinear part, we start with the following difference:
    \begin{align*}
        (\Psi&_{f,\textnormal{nonlinear}})\left(x',t'\right) - (\Psi_{f,\textnormal{nonlinear}})(x,t) \\
        &=\int_{0}^{t} \int_{\T} \grad_y K(x,t;y,s) \cdot  V(y,s)f(y,s) \log f(y,s) dy ds \\
        &\quad-\int_{0}^{t'} \int_{\T} \grad_y K(x',t';y,s) \cdot V(y,s)f(y,s) \log f(y,s) dy ds \\
        &=\int_{0}^{t} \int_{\T} (\grad_y K(x,t;y,s) - \grad_y K(x',t;y,s)) \cdot V(y,s)f(y,s) \log f(y,s) dy ds \\
        &\quad+\int_{0}^{t'} \int_{\T} (\grad_y K(x',t;y,s)-\grad_y K(x',t';y,s)) \cdot V(y,s)f(y,s) \log f(y,s) dy ds \\
        &\quad \quad + \int_{t'}^{t} \int_{\T} \grad_y K(x',t;y,s) \cdot V(y,s)f(y,s) \log f(y,s) dy ds \\
        &\eqqcolon I_1 + I_2 + I_3.
    \end{align*}
    The next step is estimation of the integrals $I_1,I_2$, and $I_3$. After that, we will combine our estimates on $I_1,I_2$, and $I_3$ to obtain desired continuity result.\\

    \noindent \textit{Estimation of $I_1$}: 
    \begin{align*}
        \abs{I_1} &\leq \int_{0}^{t} \int_{\T} \abs{\grad_y K(x,t;y,s) - \grad_y K(x',t;y,s)} \abs{V(y,s)} \abs{f(y,s)} \abs{\log f(y,s)} dy ds \\
        &\leq \normX{f}\normX{\log f} \normV{V} \int_{0}^{t} \int_{\T} \abs{\grad_y K(x,t;y,s)-\grad_y K(x',t;y,s)} dy ds \\
        \overset{\eqref{double-int-fund-space-holder},\eqref{norm-estimate-two}}&{\leq} CR T^{(1-\beta)/2}\left(\frac{R}{\mu}+\abs{\log \mu}+1\right) \normV{V} \abs{x-x'}^{\beta}
    \end{align*}
    \textit{Estimation of $I_2$}:
    \begin{align*}
        \abs{I_2} &\leq \int_{0}^{t'} \int_{\T} \abs{\grad_y K(x',t;y,s)-\grad_y K(x',t';y,s)} \abs{V(y,s)} \abs{f(y,s)} \abs{\log f(y,s)} dy ds \\
        &\leq \normX{f}\normX{\log f} \normV{V} \int_{0}^{t'} \int_{\T} \abs{\grad_y K(x',t;y,s)-\grad_y K(x',t';y,s)} dy ds \\
        \overset{\textrm{FTC},\eqref{norm-estimate-two}}&{\leq} R\left(\frac{R}{\mu}+\abs{\log \mu}+1\right) \normV{V} \int_{0}^{t'} \int_{\T} \int_{t'}^{t} \abs{\partial_\tau \grad_y K(x',\tau;y,s)}d\tau dy ds \\
        \overset{\eqref{triple-int-fund-sol}}&{\leq} CR\left(\frac{R}{\mu}+\abs{\log \mu}+1\right) \normV{V} \abs{t-t'}^{1/2}
    \end{align*}
    \textit{Estimation of $I_3$}:
    \interdisplaylinepenalty=2500
    \begin{align*}
        \abs{I_3} &\leq \int_{t'}^{t} \int_{\T} \abs{\grad_y K(x',t;y,s)} \abs{V(y,s)} \abs{f(y,s)} \abs{\log f(y,s)} dy ds \\
        &\leq \normX{f} \normX{\log f} \normV{V} \int_{t'}^{t} \int_{\T} \abs{\grad_y K(x',t;y,s)} dy ds \\
        \overset{\eqref{double-int-fund-grad-bound},\eqref{norm-estimate-two}}&{\leq} CR\left(\frac{R}{\mu}+\abs{\log \mu}+1\right) \normV{V} \abs{t-t'}^{1/2}
     \end{align*}
    By combining estimates of $I_1,I_2$, and $I_3$, for some constant $C$, we obtain 
    \begin{equation}\label{space-hold-cont-nonlin-pt}
        \abs{(\Psi_{f,\textnormal{nonlinear}})(x,t) - (\Psi_{f,\textnormal{nonlinear}})\left(x',t'\right)} \leq C \left(\abs{x-x'}^\beta + \abs{t-t'}^{1/2} \right)
    \end{equation}
    which proves the continuity of the nonlinear part. Thus, $(\Psi f)(x,t)$ is continuous function of $(x,t)$ whenever $f \in Y$.

    Second, we want to show that $\normX{\Psi f} \leq R$ whenever $\normX{f} \leq R$. We can estimate
    \begin{align*}
        \abs{\Psi f} &\leq \int_{\T} \abs{K(x,t;y,0)} \abs{f_0(y)}dy \\  &\quad + \int_{0}^{t} \int_{\T} \abs{\grad_y K(x,t;y,s)}  \abs{V(y,s)}\abs{f(y,s)} \abs{\log f(y,s)} dy ds \\
        \overset{\eqref{positivity-fund-sol}}&{\leq} \normC{f_0} \int_{\T} K(x,t;y,0)dy \\ & \quad + \normX{f}\normX{\log f} \normV{V} \int_{0}^{t} \int_{\T} \abs{\grad_y K(x,t;y,s)} dy ds \\
        \overset{\eqref{single-int-fund-bound},\eqref{double-int-fund-grad-bound},\eqref{norm-estimate-two}}&{\leq} \exp(W_\textup{sup}t) \normC{f_0} + CRT^{1/2} \normV{V} \left( \frac{R}{\mu} + \abs{\log \mu} + 1 \right) \\
        \overset{\eqref{time-bound}}&{\leq} 2\normC{f_0} + 1 \\
        &\leq R.
    \end{align*}
    Then, by taking supremum on $\T \times [0,T]$ in the both sides of the last inequality, we obtain $\normX{\Psi f} \leq R$. 

    Third, we should lastly show that $\Psi f \geq \mu$ whenever $f \geq \mu$. By using Assumption \ref{assumption-initial-data} i.e. $f_0(x) \geq 4\mu$ 
    together with strict positivity and comparison principle properties of the fundamental solution $K$, we can estimate
    \begin{align*}
        \Psi f (x,t) &= \int_{\T} K(x,t;y,0)f_0(y)dy \\  &\quad-\int_{0}^{t} \int_{\T} \grad_y K(x,t;y,s) \cdot V(y,s)f(y,s) \log f(y,s) dy ds \\
        \overset{\textup{\ref{assumption-initial-data}},\eqref{positivity-fund-sol}}&{\geq} 4\mu \int_{\T} K(x,t;y,0)dy \\&\qquad \qquad-\int_{0}^{t} \int_{\T} \abs{\grad_y K(x,t;y,s)} \abs{V(y,s)}\abs{f(y,s)} \abs{\log f(y,s)} dy ds \\
        \overset{\eqref{single-int-fund-bound}}&{\geq} 4\mu \exp(W_\textup{inf} t) - \normX{f} \normX{\log f} \normV{V} \int_{0}^{t} \int_{\T} \abs{\grad_y K(x,t;y,s)} dyds \\
        \overset{\eqref{double-int-fund-grad-bound},\eqref{norm-estimate-two}}&{\geq} 4\mu \exp(-\abs{W_\textup{inf}} T) - CRT^{1/2} \normV{V} \left( \frac{R}{\mu} + \abs{\log \mu} + 1 \right) \\
        \overset{\eqref{time-bound}}&{\geq} 2\mu - \min \{ \mu,1 \} \\
        &\geq \mu .
    \end{align*}
    Thus, we obtain $\Psi f \geq \mu$ whenever $f \geq \mu$. These are show map \eqref{map} is well-defined on $Y$ and complete the proof.
\end{proof}
%%%

%%% Short sentence for a better transition
Following an estimation procedure similar to the proof of the previous lemma, in the next lemma, we establish the contraction property of the map \eqref{map}.
\begin{lemma}[Contraction property of the map]\label{contraction-property}
    The map \eqref{map} is $\frac{1}{2}$-contraction mapping on $Y$.
\end{lemma}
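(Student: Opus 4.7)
The plan is to directly estimate $\Psi f - \Psi g$ pointwise. Since the linear piece $\Psi_{f_0,\textnormal{linear}}(x,t)$ depends only on the initial data $f_0$ (not on the argument $f$ of $\Psi$), it cancels in the difference, leaving
\begin{equation*}
\Psi f(x,t) - \Psi g(x,t) = -\int_0^t\!\!\int_{\T} \grad_y K(x,t;y,s)\cdot V(y,s)\bigl[f(y,s)\log f(y,s) - g(y,s)\log g(y,s)\bigr]\,dy\,ds.
\end{equation*}
Thus everything reduces to controlling the pointwise difference of the nonlinearity in terms of $\normX{f-g}$, and then reusing the same kernel bound \eqref{double-int-fund-grad-bound} that appeared in the proof of Lemma \ref{well-definedness}.

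The key algebraic step is the identity
\begin{equation*}
f\log f - g\log g = (f-g)\log f + g(\log f - \log g).
\end{equation*}
Since $f,g\in Y$ satisfy $f,g\geq \mu$ and $\normX{f},\normX{g}\leq R$, the estimates \eqref{norm-estimate-one} and \eqref{norm-estimate-two} immediately yield
\begin{equation*}
\normX{f\log f - g\log g} \leq \normX{\log f}\,\normX{f-g} + \normX{g}\cdot\tfrac{1}{\mu}\normX{f-g} \leq \Bigl(\tfrac{2R}{\mu}+\abs{\log \mu}+1\Bigr)\normX{f-g}.
\end{equation*}
This is precisely the Lipschitz constant of $s\mapsto s\log s$ on $[\mu,R]$ bounded by the quantities already appearing in the definition \eqref{time-bound} of $T$.

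Plugging this into the integral expression above and pulling the $\normX{\,\cdot\,}$-norms outside the integral, I would bound
\begin{equation*}
\abs{\Psi f(x,t) - \Psi g(x,t)} \leq \normV{V}\Bigl(\tfrac{2R}{\mu}+\abs{\log \mu}+1\Bigr)\normX{f-g} \int_0^t\!\!\int_{\T}\abs{\grad_y K(x,t;y,s)}\,dy\,ds,
\end{equation*}
then apply \eqref{double-int-fund-grad-bound} to replace the final integral by $C\, T^{1/2}$, and finally take supremum in $(x,t)$. The result is
\begin{equation*}
\normX{\Psi f - \Psi g} \leq C T^{1/2}\normV{V}\Bigl(\tfrac{2R}{\mu}+\abs{\log \mu}+1\Bigr)\normX{f-g},
\end{equation*}
and the time bound \eqref{time-bound}, together with $R\geq 1$, forces the prefactor to be $\leq \tfrac12$. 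The proof is conceptually routine once the nonlinearity estimate is in place; the only point requiring care is making sure the constants $C$, $R$, $\mu$, and $\normV{V}$ that were previously absorbed into \eqref{time-bound} are the \emph{same} ones that emerge here, so that the chosen $T$ indeed delivers the factor $\tfrac12$ rather than some larger constant. No new obstacle appears beyond what was already handled in Lemma \ref{well-definedness}.
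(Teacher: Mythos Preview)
Your proposal is correct and essentially identical to the paper's own proof: the paper also cancels the linear piece, splits $f\log f - g\log g$ by adding and subtracting $g\log f$, applies \eqref{norm-estimate-one}, \eqref{norm-estimate-two}, and \eqref{double-int-fund-grad-bound}, and then invokes the time bound \eqref{time-bound} to obtain the factor $\tfrac12$. Your explicit remark that $R\geq 1$ is needed to match the $CR$ appearing in \eqref{time-bound} is a detail the paper leaves implicit but which is indeed the point.
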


\begin{proof}
    Assume that $f$ and $g$ are two arbitrary functions from $Y$. We estimate the following difference in the map \eqref{map}:
    \begin{align*}
         |&(\Psi f)(x,t) - (\Psi g)(x,t)| \\
         &\leq \int_{0}^{t} \int_{\T} |\grad_y K(x,t;y,s)| \abs{ V(y,s)\left(f(y,s) \log f(y,s) - g(y,s)\log g(y,s) \right) } dy ds \\ 
         &= \int_{0}^{t} \int_{\T} |\grad_y K(x,t;y,s)| \biggl| V(y,s)(f(y,s)\log f(y,s) - g(y,s)\log f(y,s) \\ 
         &\qquad \qquad \quad + g(y,s)\log f(y,s) - g(y,s)\log g(y,s) )\bigg|  dy ds \\
         &\leq ( \normX{f-g}\normX{\log f} + \normX{g} \normX{\log f - \log g} ) \normV{V} \int_{0}^{t} \int_{\T} |\grad_y K(x,t;y,s)|  dy ds \\
         &\overset{\eqref{double-int-fund-grad-bound},\eqref{norm-estimate-one},\eqref{norm-estimate-two}}{\leq} C T^{1/2} \left( \frac{2R}{\mu} + \abs{\log \mu} + 1 \right) \normV{V} \normX{f-g} \\
         &\overset{\eqref{time-bound}}{\leq} \frac{1}{2} \normX{f-g}.
    \end{align*}
    Finally, by taking supremum of both sides in the last inequality on $\T \times [0,T]$, we obtain the contraction property of the map \eqref{map} and this completes the proof.
\end{proof}

%%% Short sentence for a better transition
Having at hand, the well definiteness and contraction properties of the map \eqref{map}, we can obtain the following existence result now.

\begin{corollary}[Existence of a fixed point]\label{existence-of-a-fixed-point}
    There exists a unique fixed point $f$ of the map \eqref{map}.    
\end{corollary}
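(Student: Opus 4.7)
The plan is to apply the Banach fixed point theorem to $\Psi$ on $Y$. The first step is to verify that $Y$ is a complete metric space under the sup-norm inherited from $X = C^0(\T \times [0,T])$. Since $X$ is a Banach space, it suffices to confirm that $Y$ is closed in $X$, which follows because both constraints defining $Y$ in \eqref{set-Y} are stable under uniform convergence: the closed ball $\{\normX{f} \leq R\}$ is closed, and the pointwise inequality $f \geq \mu$ is preserved under uniform limits.

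With $Y$ complete, the second step is to combine Lemma \ref{well-definedness}, which gives $\Psi(Y) \subset Y$, with Lemma \ref{contraction-property}, which shows $\Psi$ is a $\frac{1}{2}$-contraction on $Y$. The Banach fixed point theorem then produces a unique $f \in Y$ with $\Psi f = f$. There is no substantive obstacle in this corollary: all the delicate estimates — the Gaussian kernel bounds from Corollary \ref{gaussian-integral-estimates}, the Lipschitz control \eqref{norm-estimate-one} on the logarithm, and the smallness of $T$ enforced in \eqref{time-bound} — were already absorbed into the two preceding lemmas. I would note explicitly that uniqueness is asserted only within $Y$; the positivity lower bound $f \geq \mu$ and the upper bound $\normX{f} \leq R$ are properties of the constructed solution rather than external side conditions, and they are precisely what is needed in Section \ref{global-existence-section} to iterate the local construction on nested time intervals via the a priori bounds.
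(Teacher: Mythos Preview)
Your proof is correct and follows exactly the paper's approach: invoke Lemmas \ref{well-definedness} and \ref{contraction-property} and apply Banach's fixed point theorem on $Y$. You add the explicit check that $Y$ is closed in $X$ (hence complete), which the paper leaves implicit, but otherwise the arguments are identical.
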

\begin{proof}
    From Lemmas \ref{well-definedness} and \ref{contraction-property}, we know that the map \eqref{map} is well-defined contraction mapping on $Y$ so, by Banach's fixed point theorem, there exists a unique fixed point $f \in Y$ such that $f = \Psi f$ and this completes the proof.
\end{proof}

%%& Regularity of the solution
In \eqref{space-hold-cont-nonlin-pt} we get something stronger which means that $\Psi_{f,\textnormal{nonlinear}}$ is $\beta$-Hölder continuous in space and $\frac{1}{2}$-Hölder continuous in time. From Theorem \ref{rep-formula}, we also know that $\Psi_{f_0,\textnormal{linear}}$ is $C^{2,1}_{x,t}$. In particular, we conclude that $ f= \Psi_{f_0,\textnormal{linear}} + \Psi_{f,\textnormal{nonlinear}}$ is $\beta$-Hölder continuous in space.  
By using this information, in the next corollary, we prove that $f$ is in Hölder space $C^{2+\beta,1+\beta/2}_{\textup{loc}}$ to get a classical solution of \eqref{FP}.

\begin{corollary}[Regularity]\label{regularity}
    The unique fixed point $f$ of \eqref{map} belongs to
    \begin{equation*}
        f \in C^{2+\beta,1+\beta/2}_{\textup{\textup{loc}}}(\T \times (0,T])
    \end{equation*}
    where $T$ as characterized in \eqref{time-bound}.
    So, this means $f$ is two times differentiable in $x$ and one time differentiable in $t$ with Hölder continuous derivatives. Hence, $f$ is the unique classical solution of \eqref{FP}.
\end{corollary}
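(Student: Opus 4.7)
The approach is a standard parabolic bootstrap argument using the two Schauder estimates in Proposition \ref{schauder-estimates} in succession, exploiting that we already know $f$ is Hölder continuous in space (from the estimate \eqref{space-hold-cont-nonlin-pt} applied to $\Psi_{f,\textnormal{nonlinear}}$ plus the $C^{2+\beta,1+\beta/2}$ regularity of $\Psi_{f_0,\textnormal{linear}}$ coming from Theorem \ref{rep-formula}). Since Schauder estimates are interior, I will work on parabolic cylinders $Q_r \subset \T \times (0,T]$ and conclude the local regularity claimed.

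\textbf{Step 1: Hölder-to-$C^{1+\beta}$ via divergence-form Schauder.} The fixed-point identity $f = \Psi f$, combined with Duhamel's formula and integration by parts, shows that $f$ is a solution (in the appropriate weak/mild sense) of
\begin{equation*}
\partial_t f = L_{\textup{FP}} f + \grad \cdot A(x,t), \qquad A(x,t) \coloneqq \frac{\grad D(x)}{\pi(x,t)}\, f\log f.
\end{equation*}
Because $f \geq \mu > 0$ is already $\beta$-Hölder in $x$ (locally) and $\grad D/\pi \in C^{1+\beta,\beta/2}_{x,t}$ by \ref{regularity-of-coefficients}, the composition $f \log f$ is $\beta$-Hölder in $x$ (logarithm is smooth on $[\mu,\infty)$), and so $A \in C^{\beta,0}_{x,t}$ locally. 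Applying the divergence-form Schauder estimate \eqref{schauder-estimate-one} (invoked as a regularity statement, with the standard approximation by smoothed initial data and smoothed coefficients that can be passed to the limit using the a-priori bound) yields $f \in C^{1+\beta,(1+\beta)/2}_{x,t,\textup{loc}}(\T \times (0,T])$.

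\textbf{Step 2: $C^{1+\beta}$-to-$C^{2+\beta}$ via non-divergence-form Schauder.} Now that $\grad f$ exists and is $\beta$-Hölder in the parabolic sense, expand the nonlinear term via the product rule:
\begin{equation*}
\grad \cdot \left(\frac{\grad D}{\pi}\, f\log f\right) = \grad \cdot \left(\frac{\grad D}{\pi}\right) f \log f + \frac{\grad D}{\pi} \cdot (\log f + 1)\, \grad f \eqqcolon B(x,t).
\end{equation*}
Each factor lies in a Hölder class by \ref{regularity-of-coefficients} and by Step 1 (again using $f \geq \mu$ to control $\log f$), so $B \in C^{\beta,\beta/2}_{x,t,\textup{loc}}$. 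The equation becomes $\partial_t f = L_{\textup{FP}} f + B$ with $L_{\textup{FP}}$ having coefficients in $C^{1+\beta,\beta/2}_{x,t}$ (in particular in $C^{\beta,\beta/2}_{x,t}$) when written in non-divergence form. Applying \eqref{schauder-estimate-two} gives $f \in C^{2+\beta,1+\beta/2}_{x,t,\textup{loc}}(\T \times (0,T])$, which is precisely the claim. Since all spatial and the first time derivative are continuous and the PDE holds pointwise, $f$ is a classical solution of \eqref{FP}. Uniqueness follows from Corollary \ref{existence-of-a-fixed-point}.

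\textbf{Main obstacle.} The delicate point is the passage from the Schauder estimate to a Schauder regularity statement: the estimates in Proposition \ref{schauder-estimates} assume a priori that $u \in C^{2+\beta,1+\beta/2}$. To rigorously promote the Hölder fixed point to classical regularity I would use the standard frozen-coefficient and mollification argument — approximate $f_0$ and the coefficients by smooth data, obtain smooth solutions by the Duhamel representation in Theorem \ref{rep-formula} (which is valid once the source becomes smooth), apply the Schauder estimate uniformly, and pass to the limit using uniqueness of the mild solution. A secondary but routine check is that the compositions $f \log f$ and $(\log f + 1)\grad f$ inherit the parabolic Hölder norms of $f$ and $\grad f$ thanks to the uniform lower bound $f \geq \mu$ established in Lemma \ref{well-definedness}; this is where the strict positivity earned in the fixed-point step becomes essential.
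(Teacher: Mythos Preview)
Your proposal is correct and follows essentially the same two-step bootstrap as the paper: use the divergence-form Schauder estimate \eqref{schauder-estimate-one} to upgrade the spatial $\beta$-H\"older regularity of $f$ to $C^{1+\beta,(1+\beta)/2}_{x,t}$, then expand the divergence and apply the non-divergence Schauder estimate \eqref{schauder-estimate-two} to reach $C^{2+\beta,1+\beta/2}_{\textup{loc}}$. The only cosmetic difference is in the regularization step you flag as the ``main obstacle'': the paper mollifies the source term $F = Vf\log f$ directly (producing smooth forcings $F_n$ and classical solutions $u_n$ to which \eqref{schauder-estimate-one} applies, then passing to the limit via the representation formula \eqref{map} and a lower-semicontinuity argument \`a la Krylov), whereas you propose mollifying the initial data and coefficients; both are standard and lead to the same conclusion.
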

\begin{proof}
    We basically adopt the argument of Krylov \cite[Thm.~8.7.3]{krylov1996lectures} together with Schauder estimates \eqref{schauder-estimate-one} and \eqref{schauder-estimate-two} to our problem. 
    From the discussion above, $f$ is $\beta$-Hölder continuous in space and $V=\frac{\grad D}{\pi}$ is also $\beta$-Hölder continuous in space due to Assumption \ref{regularity-of-coefficients}. As a conclusion, $F \coloneqq V f \log f$ is $\beta$-Hölder continuous in space. Moreover, $L_{\textup{FP}}$ as in \eqref{parabolic-operator} is second-order uniformly parabolic operator in divergence form with $\beta$-Hölder continuous coefficients due to  Assumptions \ref{assumption-constants} and \ref{regularity-of-coefficients}. 
    
    Since $F \in C_x^\beta$, there is a sequence of functions $F_n \in C^\infty_{\textup{loc}} \intersect C^{\beta}_x$ which converge uniformly to $F$ and have uniformly bounded $C^{\beta}_x$-norms. Let $u_n$ be the corresponding solutions of \eqref{LPNE} with forcing $F_n$. Since $u_n \in C^{2+\beta,1+\beta/2}_{x,t}$ by \cite[Thm.~10,~Ch.~3]{friedman2008partial} Schauder estimate \eqref{schauder-estimate-one} applies, and therefore the $C^{1+\beta,(1+\beta)/{2}}_{x,t}$-norms of $u_n$ are uniformly bounded. Then one can establish that $u_n \rightarrow f$ uniformly, by using the uniform convergence of the $F_n$ and the representation formula \eqref{map}. By a standard argument with distributional derivatives we can derive that $\|f\|_{C^{1+\beta, (1+\beta)/2}_{x,t}} \leq \liminf \|u_n\|_{C^{1+\beta, (1+\beta)/2}_{x,t}}$ \cite[Ex.~8.5.6]{krylov1996lectures}. 
    Since $V$ is also in $C^{1+\beta,\beta/2}_{x,t}$ due to Assumption \ref{regularity-of-coefficients}, this regularity of $f$ makes the divergence term $\grad \cdot F \in C^{\beta,\beta/2}_{x,t}$. Therefore, by applying the non-divergence form Schauder estimate \eqref{schauder-estimate-two} and similar regularization procedure it is possible to upgrade the regularity of $f$ to $C^{2+\beta,1+\beta/{2}}_{\textup{loc}}$ and this completes the proof. 
\end{proof}
%%%
%%% new Further regularity remark
\begin{remark}[Further regularity]
    In addition to boundedness, if we also assume that the
    coefficients listed in Assumption \ref{regularity-of-coefficients}
    are smooth, i.e. they belong to $C^\infty(\T \times [0,\infty))$,
    then the fundamental solution $K$ of the operator $\partial_t - L_{\textup{FP}}$ will also be smooth, thanks to \cite[Thm.~9,~Ch.~9]{friedman2008partial}. Thus, we can inductively repeat the regularization procedure given in Corollary \ref{regularity} to conclude the smoothness of the solution of \eqref{FP}.
\end{remark}

What happens if the initial data $f_0$ and $g_0$ are close to each other? In the next lemma, we answer this question.
\begin{corollary}[Continuous dependence on initial data]\label{continuous-dependence}
    Let $f$ and $g$ be solutions of \eqref{FP} with initial data $f_0$ and $g_0$ respectively. Then the following continuity estimate holds
    \begin{equation}\label{continuity-estimate}
        \normX{f-g} \leq 4 \normC{f_0-g_0}.
    \end{equation}
\end{corollary}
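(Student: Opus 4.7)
The plan is to exploit the decomposition of the fixed-point map $\Psi$ from Definition \ref{map-def} into $\Psi_{f_0,\textnormal{linear}} + \Psi_{f,\textnormal{nonlinear}}$, where only the linear part depends on the initial datum. Assuming $f_0$ and $g_0$ satisfy Assumption \ref{assumption-initial-data} with common constants so that both $f$ and $g$ lie in the same set $Y$ and share the same time bound $T$ from \eqref{time-bound}, Corollary \ref{existence-of-a-fixed-point} provides the fixed-point identities $f = \Psi f$ and $g = \Psi g$ (with respective initial data). Subtracting gives, for every $(x,t) \in \T \times [0,T]$, the decomposition
$$f(x,t) - g(x,t) = \int_{\T} K(x,t;y,0)\bigl(f_0(y) - g_0(y)\bigr)\,dy + \bigl(\Psi_{f,\textnormal{nonlinear}} - \Psi_{g,\textnormal{nonlinear}}\bigr)(x,t),$$
on which I would estimate each piece separately.

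For the linear piece, I would apply the comparison-principle bound \eqref{single-int-fund-bound} to obtain
$$\left| \int_{\T} K(x,t;y,0)\bigl(f_0(y) - g_0(y)\bigr)\,dy \right| \leq \normC{f_0 - g_0}\,\exp(W_{\textup{sup}} T) \leq 2 \normC{f_0 - g_0},$$
where the final inequality uses $W_{\textup{sup}} T \leq \bigl(\abs{W_{\textup{inf}}} + \abs{W_{\textup{sup}}} + 1\bigr) T \leq \log 2$, which is precisely guaranteed by the second factor in \eqref{time-bound}.

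For the nonlinear piece, the key observation is that $\Psi_{\cdot,\textnormal{nonlinear}}$ is independent of the initial datum, so the argument from Lemma \ref{contraction-property} applies verbatim. Splitting $f\log f - g\log g = (f-g)\log f + g(\log f - \log g)$ and using \eqref{double-int-fund-grad-bound}, \eqref{norm-estimate-one}, \eqref{norm-estimate-two} together with the first factor in \eqref{time-bound} yields
$$\bigl|\Psi_{f,\textnormal{nonlinear}}(x,t) - \Psi_{g,\textnormal{nonlinear}}(x,t)\bigr| \leq \tfrac{1}{2} \normX{f - g}.$$
Combining both estimates and taking the supremum over $\T \times [0,T]$ gives
$$\normX{f - g} \leq 2\normC{f_0 - g_0} + \tfrac{1}{2}\normX{f - g},$$
and absorbing the contraction term on the right yields \eqref{continuity-estimate}.

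I do not anticipate any real obstacle: this is the standard ``contraction with inhomogeneous anchor'' argument, where the factor $4$ arises as $2 \cdot (1 - 1/2)^{-1}$, with the first $2$ coming from the $\exp(W_{\textup{sup}} T) \leq 2$ amplification in the linear term and $1/2$ from the contraction constant of the nonlinear term. The only bookkeeping point is that the very same choice \eqref{time-bound} of $T$ that made $\Psi$ a $\tfrac{1}{2}$-contraction in Lemma \ref{contraction-property} also caps the linear amplification at $2$, so no further restriction on $T$ is needed.
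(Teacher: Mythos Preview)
Your proposal is correct and follows essentially the same approach as the paper's own proof: split $f-g$ via the fixed-point representation into the linear part (bounded by $2\normC{f_0-g_0}$ using \eqref{single-int-fund-bound} and \eqref{time-bound}) and the nonlinear part (bounded by $\tfrac{1}{2}\normX{f-g}$ via Lemma~\ref{contraction-property}), then absorb. The paper's argument is slightly more terse but identical in substance, including the same references to \eqref{positivity-fund-sol}, \eqref{single-int-fund-bound}, \eqref{time-bound}, and Lemma~\ref{contraction-property}.
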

\begin{proof}
    For the fixed points $f$ and $g$ of the map \eqref{map} with initial data $f_0$ and $g_0$ respectively, we can estimate
    \begin{align*}
         |f - g| &\leq \int_{\T} \abs{K(x,t;y,0)} \abs{f_0(y)-g_0(y)}dy \\
         & \  + \int_{0}^{t} \int_{\T} |\grad_y K(x,t;y,s)| \abs{ V(y,s)\left(f(y,s) \log f(y,s) - g(y,s)\log g(y,s) \right)} dy ds \\
         &\leq 2\normC{f_0-g_0} + \frac{1}{2} \normX{f-g}
    \end{align*}
    where we have used \eqref{positivity-fund-sol}, \eqref{single-int-fund-bound}, \eqref{time-bound}, and Lemma \ref{contraction-property} in the last step. Then, by taking supremum of both sides on $\T \times [0,T]$ in the last inequality and then by doing re-arrangement, we arrive to \eqref{continuity-estimate} which completes the proof.
\end{proof}

%% Main theorem
As a consequence of lemmas and corollaries that we proved in this section, we arrive to the final result of this section, which is on local well-posedness.
\begin{theorem}[Local well-posedness]\label{local-well-posedness}
    Under the assumptions \ref{assumption-constants}, \ref{regularity-of-coefficients}, and \ref{assumption-initial-data} the initial value problem \eqref{FP} is locally well posed on $Y$.
\end{theorem}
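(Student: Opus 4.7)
The statement is essentially a consolidation: local well-posedness on $Y$ means existence of a classical solution, uniqueness in $Y$, and continuous dependence on the initial datum, and all three have been assembled in the preceding corollaries. The plan is therefore to simply stitch them together rather than prove anything new.

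First, I would invoke Corollary \ref{existence-of-a-fixed-point} to produce a unique fixed point $f \in Y$ of the map $\Psi$ from Definition \ref{map-def} on the time interval $[0,T]$, with $T>0$ chosen as in \eqref{time-bound}. By construction of $\Psi$ via Duhamel's principle applied to \eqref{LPNE}, any fixed point $f = \Psi f$ is a mild solution of \eqref{FP}, and the positivity constraint $f \ge \mu$ built into $Y$ automatically yields a strictly positive solution, which is the physically meaningful class.

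Second, I would upgrade the mild solution to a classical one via Corollary \ref{regularity}, which shows $f \in C^{2+\beta,1+\beta/2}_{\textup{loc}}(\T \times (0,T])$. This is the step that justifies calling the fixed point a classical solution of \eqref{FP}, since the logarithmic nonlinearity $\grad \cdot (V f \log f)$ and the linear operator $L_{\textup{FP}}$ can both be evaluated pointwise on such $f$. Together with the initial condition $f(\cdot,0) = f_0$, which holds because $\Psi_{f_0,\textnormal{linear}}(x,t) \to f_0(x)$ as $t \to 0$ by Definition \ref{fundamental-solution} and the nonlinear integral vanishes at $t=0$, this gives existence of a classical positive solution on $Y$.

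Third, uniqueness in $Y$ is immediate from the Banach contraction principle in Corollary \ref{existence-of-a-fixed-point}: any two solutions in $Y$ with the same initial data must both be fixed points of $\Psi$, hence they coincide. Finally, continuous dependence on $f_0$ in the $X$-norm is exactly the content of Corollary \ref{continuous-dependence}, with the explicit estimate $\normX{f-g} \le 4 \normC{f_0 - g_0}$. I expect no obstacle here: the only subtle point worth flagging is that the time horizon $T$ in \eqref{time-bound} depends on $\mu$ and $\normC{f_0}$ through $R$, so the well-posedness is local in the sense that $T$ shrinks as the lower bound $\mu$ of the initial data degrades or as $\normC{f_0}$ grows, but on the fixed class $Y$ it is a uniform statement.
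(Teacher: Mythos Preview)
Your proposal is correct and follows the paper's own proof essentially verbatim: the paper simply states that the result follows from Corollaries~\ref{existence-of-a-fixed-point}, \ref{regularity}, and \ref{continuous-dependence}. Your additional remarks on the initial condition and the dependence of $T$ on $\mu$ and $\normC{f_0}$ are accurate elaborations but not required.
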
    
\begin{proof} 
    Follows from Corollaries \ref{existence-of-a-fixed-point}, \ref{regularity}, and \ref{continuous-dependence}.
\end{proof}
%%%

\begin{remark}[Generalization of \eqref{FP}]\label{generalization-of-FP}
    For any locally Lipschitz function $N(f)$ on $(0,\infty)$ and a vector field $G(x,t) \in C^{1+\beta,\beta/2}_{x,t}$,
    under the similar assumptions, it is possible to generalize all of our analysis to the following class of PDEs
    \begin{equation*}
        \left\{
        \begin{aligned}
        & \frac{\partial f}{\partial t} = Lf + \grad \cdot \left( G(x,t) N(f) \right), &\quad &x \in \T,\   t>0, \\
        & f(x,0)=f_0(x), &\quad &x \in \T,
        \end{aligned}   
        \right.
    \end{equation*}
    since Gaussian bounds in Theorem \ref{gaussiann-bounds} work for any second-order uniformly parabolic differential operator $\partial_t-L$ 
    with Hölder continuous coefficients.
\end{remark}
\label{local-existence-proof}

\section{Proof of Global Existence}
\label{global-existence-section}
In this section, we will show the existence of the unique global solution of \eqref{FP}. Our argument for obtaining the unique global solution relies on repeating our local existence proof on nested time intervals and using an a priori estimate on the maximum and minimum of the ratio between a classical solution $f$ and the equilibrium state $f^{\mathrm{eq}}$ (see \eqref{eq-state}). 

Recall the following a-priori estimates, proved in \cite{epshteyn2022nonlinear,epshteyn2024longtimeasymptoticbehaviornonlinear}.

\begin{proposition}[A priori estimates {\cite[Prop.~1.6]{epshteyn2022nonlinear}}, {\cite[Cor.~1.8]{epshteyn2024longtimeasymptoticbehaviornonlinear}}]\label{a-priori-estimate-1}
    Let $f(x,t)$ be a classical solution of \eqref{FP}. Then for all $(x,t) \in \T \times (0,\infty)$, under the assumptions \ref{assumption-constants}, \ref{regularity-of-coefficients}, \ref{assumption-initial-data}, and \ref{assumption-boundedness}, the following two sided estimate holds on $f(x,t)$:
    \begin{align*}
         \exp &\left( \frac{1}{D(x)} \min_{y\in\T} \left( D(y)  \log \frac{f_0(y)}{f^{\mathrm{eq}}(y)} \right) \right) f^{\mathrm{eq}}(x) \\
         &\leq f(x,t) \leq \exp \left( \frac{1}{D(x)} \max_{y\in\T} \left( D(y)  \log \frac{f_0(y)}{f^{\mathrm{eq}}(y)} \right) \right) f^{\mathrm{eq}}(x).
    \end{align*}
    Also, by using the upper and lower bounds of $f^{\textup{eq}}$ from Lemma 1.6 of \cite{epshteyn2024longtimeasymptoticbehaviornonlinear}, it is possible to make it the above estimate uniform:
    \begin{equation} \label{a-priori-estimate}
        m \leq f(x,t) \leq M
    \end{equation} 
    where $m,M>0$ are constants which are independent from $x$ and $t$.
\end{proposition}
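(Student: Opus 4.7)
The plan is to introduce the auxiliary variable $v(x,t) \coloneqq D(x) \log(f(x,t)/f^{\mathrm{eq}}(x))$ and show that it satisfies a linear uniformly parabolic equation on $\T$ with \emph{no} zeroth-order term. The two-sided bound then falls out of the classical parabolic maximum principle on the compact torus, and the uniform version is obtained by bounding $f^{\mathrm{eq}}$ above and below by positive constants.

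First I would observe that, by \eqref{eq-state}, $D(x)\log f^{\mathrm{eq}}(x) + \phi(x) = C^{\mathrm{eq}}$ is constant in $x$, so that $\grad(D \log f + \phi) = \grad v$. The flux in \eqref{FP-div-form} is therefore $(f/\pi)\grad v$, and since $f^{\mathrm{eq}}$ is independent of $t$ the relation $f = f^{\mathrm{eq}} \exp(v/D)$ gives $\partial_t f = (f/D)\,\partial_t v$. Substituting into \eqref{FP-div-form} and dividing through by the strictly positive factor $f/D$ yields
\begin{equation*}
    \partial_t v = \frac{D(x)}{\pi(x,t)} \Delta v + \mathbf{b}(x,t) \cdot \grad v, \qquad \mathbf{b} \coloneqq \frac{D}{\pi} \cdot \frac{\grad f}{f} + D\, \grad\!\left(\frac{1}{\pi}\right).
\end{equation*}
The leading coefficient is bounded below by $\theta > 0$ by assumption \ref{assumption-constants}, and crucially there is no zeroth-order term in $v$. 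Applying the parabolic maximum principle on $\T \times [0,T]$ for arbitrary $T>0$: at a spatial maximum one has $\grad v = 0$ and $\Delta v \leq 0$, so $\partial_t v \leq 0$, whence $t \mapsto \max_{\T} v(\cdot,t)$ is non-increasing; symmetrically the spatial minimum is non-decreasing. Exponentiating (after dividing by $D(x) > 0$) the resulting inequality
\begin{equation*}
    \min_{y\in\T} D(y)\log\frac{f_0(y)}{f^{\mathrm{eq}}(y)} \leq D(x)\log\frac{f(x,t)}{f^{\mathrm{eq}}(x)} \leq \max_{y\in\T} D(y)\log\frac{f_0(y)}{f^{\mathrm{eq}}(y)}
\end{equation*}
and multiplying through by $f^{\mathrm{eq}}(x)$ reproduces the two-sided estimate in the statement.

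The main technical point is verifying that the drift $\mathbf{b}$ is regular enough to justify the maximum principle: since $f$ is a classical solution, $\log f$ is differentiable and so $f>0$, and a pointwise lower bound on any compact time window together with the regularity hypothesis \ref{regularity-of-coefficients} makes $\mathbf{b}$ bounded and continuous there. For the uniform version \eqref{a-priori-estimate}, I would bound $f^{\mathrm{eq}}$ above and below by positive constants using the continuity of $\phi$ and $D$ on the compact torus together with assumption \ref{assumption-boundedness}; combining this with the uniform bounds on $f_0$ from assumption \ref{assumption-initial-data} yields absolute constants $m, M > 0$ independent of $(x,t)$.
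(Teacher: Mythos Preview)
The paper does not supply its own proof of this proposition; it is quoted from the cited references \cite{epshteyn2022nonlinear,epshteyn2024longtimeasymptoticbehaviornonlinear}, which the introduction describes as establishing the bound via a maximum principle. Your argument --- passing to the variable $v = D\log(f/f^{\mathrm{eq}})$, observing that it satisfies a linear parabolic equation with no zeroth-order term, and invoking the maximum/minimum principle on $\T$ --- is correct and is exactly the approach one expects those references to take.

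One remark on the ``main technical point'' you raise: the boundedness of the drift $\mathbf{b}$ is in fact not needed for the maximum principle to go through here. At any interior space--time maximum of $v$ one has $\grad v = 0$, so the drift term $\mathbf{b}\cdot\grad v$ vanishes regardless of the size of $\mathbf{b}$, and the sign of $\partial_t v$ is controlled by $\frac{D}{\pi}\Delta v \leq 0$ alone. What you genuinely need is that $f>0$ so that the change of variables and the division by $f/D$ are legitimate; this is implicit in the hypothesis that $f$ is a classical solution of \eqref{FP-div-form} (or equivalently \eqref{FP}), since $\log f$ appears in the equation, and positivity plus continuity on the compact set $\T\times[0,T]$ then gives the positive lower bound you invoke.
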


\begin{theorem}[Global well-posedness]\label{global-well-posedness}
    Under the assumptions \ref{assumption-constants}, \ref{regularity-of-coefficients}, \ref{assumption-initial-data}, and \ref{assumption-boundedness}  
    there exists a unique positive classical global solution of \eqref{FP} which belongs to
    \begin{equation*}
        f \in C^{2+\beta,1+\beta/2}_{\textup{loc}}(\T \times (0,\infty)).  
    \end{equation*}
\end{theorem}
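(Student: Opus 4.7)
The plan is to extend the local solution from Theorem \ref{local-well-posedness} to all of $\T \times [0,\infty)$ by iterating the local existence argument on successive time intervals, using the uniform a priori bounds of Proposition \ref{a-priori-estimate-1} to guarantee that each new time step has length bounded below by a single positive constant independent of the iteration level.

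First I would apply Theorem \ref{local-well-posedness} with initial data $f_0$ to obtain a unique classical solution $f$ on $\T \times [0,T_0]$ for some $T_0>0$ coming from \eqref{time-bound}. By Proposition \ref{a-priori-estimate-1}, this solution automatically satisfies $m \leq f(x,t) \leq M$, where the constants $m,M>0$ depend only on the structural data and on $f_0$, not on $T_0$ or on $t$. The trace $f(\cdot,T_0) \in C^0(\T)$ therefore satisfies Assumption \ref{assumption-initial-data} with the parameters $\mu' = m/4$ and $\Lambda' = M$, so Theorem \ref{local-well-posedness} applied at initial time $T_0$ produces a unique classical solution on $\T \times [T_0, T_0+\tau]$. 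The crucial point is that the admissible time step in \eqref{time-bound} depends on the initial data only through $\mu'$ and through $R' = 1 + \mu' + 2\normC{f(\cdot,T_0)} \leq 1 + m/4 + 2M$, together with the fixed structural quantities $\normV{V}$, $W_{\textup{inf}}$, and $W_{\textup{sup}}$, so a single $\tau>0$ suffices at every step.

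Iterating, I inductively construct a classical solution on $\T \times [n\tau,(n+1)\tau]$ for every $n \geq 0$. The uniqueness part of Theorem \ref{local-well-posedness}, combined with the fact that any classical solution lies in the set $Y$ for the corresponding step (by Proposition \ref{a-priori-estimate-1}), ensures that consecutive pieces agree on overlapping intervals and glue into a single global classical solution; the same observation gives uniqueness among global classical solutions. Positivity is built in via $f \geq m > 0$. For the interior regularity $f \in C^{2+\beta,1+\beta/2}_{\textup{loc}}(\T\times(0,\infty))$, Corollary \ref{regularity} applied at each step gives this regularity on every open interval $(n\tau,(n+1)\tau)$; to cover a neighborhood of the junction times $t = n\tau$ with $n\geq 1$, I restart the local existence argument from time $n\tau - \tau/2$, using the continuous positive bounded trace $f(\cdot,n\tau-\tau/2)$ as initial data, and invoke uniqueness to identify the restarted solution with the already-constructed one on $[n\tau-\tau/2, n\tau+\tau/2]$.

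The main obstacle, and the reason the iteration actually closes, is precisely the uniformity of the bounds in Proposition \ref{a-priori-estimate-1}. Without a lower bound $m > 0$ uniform in time, the pointwise minimum of $f(\cdot,n\tau)$ could in principle decay to zero, forcing the effective $\mu'_n \to 0$ and collapsing the permissible time step $\tau_n \to 0$ through the singular dependence of \eqref{time-bound} on $\mu$ (via both $\mu$ itself and $|\log \mu|$); without an upper bound $M$, the quantity $R_n$ would grow and degrade the bound on $\tau_n$ as well. The uniform two-sided a priori estimate rules out both scenarios and delivers a single positive $\tau$ that drives the iteration across all of $[0,\infty)$.
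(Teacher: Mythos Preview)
Your proposal is correct and follows essentially the same route as the paper: iterate the local existence argument on successive time intervals, using the uniform a priori bounds $m \leq f \leq M$ from Proposition~\ref{a-priori-estimate-1} to secure a single positive time step $\tau$ independent of the iteration level, then glue and invoke local uniqueness. Your handling of interior regularity at the junction times $t=n\tau$ (restarting from $n\tau-\tau/2$) is in fact slightly more explicit than the paper's, which simply appeals to Corollary~\ref{regularity} after concatenation.
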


\begin{proof}
We will construct the solution via induction. Define
\[ R' \coloneqq 1 + \mu + 2\normC{f_0} + 2M\]
and
\[
     T'^{1/2} \coloneqq \min \left\{ \frac{\min \{ \mu,1,\frac{m}{4} \} }{ 2 \left( CR' \left( \frac{2R'}{\gamma} + \abs{\log \gamma} + 1 \right) \normV{V} +1 \right)}, \left( \frac{\log 2}{\abs{W_{\textup{inf}}}+\abs{W_{\textup{sup}}}+1} \right)^{1/2} \right\} \label{time-bound-2}
\]
where $C>0$ is a constant as in Theorem \ref{gaussiann-bounds} and $\gamma \coloneqq \min \{ \mu, \frac{m}{4} \}$. The claim is: for all $j \in \mathbb{Z}_{\geq 1}$, there is a classical solution of \eqref{FP} on $[0,jT']$ satisfying $ m \leq f \leq M$.

The base case, $j=1$, similarly follows from the arguments that we presented in Section \ref{local-existence-section} with the updated values of $R'$ and $T'$. The bounds $ m \leq f \leq M$ follow from the a-priori estimate in Proposition \ref{a-priori-estimate-1}.

Assume now that there exists a classical solution $f_k$ of \eqref{FP} on $[0,kT']$ satisfying $ m \leq f_k \leq M$. Define the following map on the time interval $[kT',(k+1)T']$
\begin{align*}\label{solution-map-k}
    \Phi g (x,t) \coloneqq \int_{\T} &K(x,t;y,kT')f_k(y,kT')dy 
    \\ &-\int_{kT'}^{t} \int_{\T} \grad_y K(x,t;y,s) \cdot \frac{\grad D(y,s)}{\pi(y,s)} g(y,s) \log g(y,s) dy ds
\end{align*}
on the following space
\begin{equation*}\label{set-Y-k}
    Y_k \coloneqq \Big\{g \in C^0(\T \times [kT',(k+1)T']) \eqqcolon X_k \ \big| \ g \geq \min \Big\{ \mu, \frac{m}{4} \Big\}, \ \|g \|_{X_k} \leq R' \Big\}.
\end{equation*}
Since, by the inductive hypothesis, we have $f_k(y,kT') \geq m$ so, we can argue exactly as in Lemma~\ref{well-definedness}, with $m$ as the lower bound on the initial data now instead of $4 \mu$, to show that $\Phi$ maps $Y_k$ to itself. The other parts of the local existence proof are the same as in the previous section and we conclude the existence of a fixed point solution of \eqref{FP} $g_{k+1}=\Phi g_{k+1} \in C^{2+\beta,1+\beta/{2}}_{x,t}(\T \times [kT',(k+1)T'])$. By construction, $f_k(y,kT')=g_{k+1}(y,kT')$ so the concatenation
\[f_{k+1}(y,t):= \begin{cases}
    f_k(y,t), &\text{if }~ 0 \leq t \leq kT'\\
    g_{k+1}(y,t), &\text{if }~ kT' \leq t \leq (k+1)T'
\end{cases}\]
is a fixed point solution of \eqref{FP} on $[0,(k+1)T']$, and therefore, by Corollary~\ref{regularity}, it is a classical solution of \eqref{FP} on $[0,(k+1)T']$. Consequently, the a-priori estimate \eqref{a-priori-estimate} applies and $m \leq f_{k+1} \leq M$.  This establishes the inductive hypothesis for $j=k+1$. By induction we conclude the existence of a positive global classical solution $f$ of \eqref{FP} in $C^{2+\beta,1+\beta/2}_{\textup{loc}}(\T \times (0,\infty))$. 

Moreover, uniqueness follows by a standard argument, since the set of times where two classical solutions are identical is open by another application of Banach's fixed point theorem on the same type of spaces defined above. 
\end{proof}
\label{global-existence-proof}

\section*{Acknowledgments}
We are grateful to the anonymous referees for their valuable comments and suggestions that helped to improve the paper. We also would like to thank Dr. Quoc-Hung Nguyen who pointed out that Theorem \ref{gaussiann-bounds} was missing the finite time horizon assumption in the initial preprint posted in arXiv.
%%% Funding information

The work of B. Bayir and Y. Epshteyn was partially supported by the NSF grant DMS-2118172, and the work of Y. Epshteyn was also partially supported by the Simons Foundation Fellowship SFI-MPS-SFM-00010667. The work of W. M. Feldman was partially supported by the NSF grant DMS-2407235.

\bibliographystyle{siamplain}
\bibliography{bibliography}

\end{document}